\documentclass[10pt]{article}

\usepackage{amsmath}
\usepackage{amsthm}
\usepackage{amssymb}
\usepackage{latexsym}
\usepackage{pstricks}
\usepackage{graphicx, pst-plot, pst-node, pst-text, pst-tree}
\usepackage{titlefoot}
\usepackage{enumerate}
\usepackage{titlesec}

\usepackage{cancel}
\usepackage{rotating} 
\usepackage[small,it]{caption}
\setlength{\captionmargin}{0.4in}
\setlength{\abovecaptionskip}{5pt}

\usepackage{color}
\definecolor{lightgray}{rgb}{0.8, 0.8, 0.8}
\definecolor{darkgray}{rgb}{0.7, 0.7, 0.7}
\definecolor{darkblue}{rgb}{0, 0, .4}

\usepackage[bookmarks]{hyperref}
\hypersetup{
        colorlinks=true,
        linkcolor=darkblue,
        anchorcolor=darkblue,
        citecolor=darkblue,
        urlcolor=darkblue,
        pdfpagemode=UseThumbs,
        pdftitle={Passing through a stack $k$ times},
        pdfsubject={Combinatorics},
        pdfauthor={Howard Skogman and Rebecca Smith },
}

\newtheorem{theorem}{Theorem}[section]
\newtheorem{proposition}[theorem]{Proposition}
\newtheorem{lemma}[theorem]{Lemma}
\newtheorem{definition}[theorem]{Definition}
\newtheorem{corollary}[theorem]{Corollary}

\newtheorem{notation}[theorem]{Notation}

\newtheoremstyle{example}{\topsep}{\topsep}%
     {}
     {}
     {\bfseries}
     {.}
     {.5em}
     {\thmname{#1}\thmnumber{ #2}}
\theoremstyle{example}
\newtheorem{example}[theorem]{Example}

\newtheoremstyle{negexample}{\topsep}{\topsep}%
     {}
     {}
     {\bfseries}
     {.}
     {.5em}
     {\thmname{#1}\thmnumber{ #2}}
\theoremstyle{negexample}

\newcounter{todocounter}

\long\def\symbolfootnote[#1]#2{\begingroup%
\def\thefootnote{\fnsymbol{footnote}}\footnote[#1]{#2}\endgroup}

\setlength{\textwidth}{6.2in}
\setlength{\textheight}{8in}
\setlength{\topmargin}{0in}
\setlength{\headsep}{0.25in}
\setlength{\headheight}{0.25in}
\setlength{\oddsidemargin}{0.25pt}
\setlength{\evensidemargin}{0.25pt}
\makeatletter
\newcommand{\Rm}[1]{\expandafter\@slowromancap\romannumeral #1@}
\newfont{\footsc}{cmcsc10 at 8truept}
\newfont{\footbf}{cmbx10 at 8truept}
\newfont{\footrm}{cmr10 at 10truept}
\pagestyle{plain}

\renewenvironment{abstract}%
                {
                  \begin{list}{}%
                     {\setlength{\rightmargin}{1in}%
                      \setlength{\leftmargin}{1in}}%
                   \item[]\ignorespaces\begin{small}}%
                 {\end{small}\unskip\end{list}}

 \amssubj{05A05, 05A15, 06A07}
\keywords{data structure, permutation pattern, sorting, stack, packing density, integer sequence, descent, covincular pattern}

\newpagestyle{main}[\small]{
        \headrule
        \sethead[\usepage][][]
        {\sc  Passing through a stack $k$ times }{}{\usepage}}

\setlength{\parindent}{0pt}
\setlength{\parskip}{1.5ex}

\title{\sc{Passing through a stack $k$ times}}
\author{Toufik Mansour
\\[-0.25ex]
\small Department of Mathematics\\[-0.5ex]
\small University of Haifa\\[-0.5ex]
\small 3498838 Haifa, Israel\\[15pt]
Howard Skogman
\\[-0.25ex]
\small Department of Mathematics\\[-0.5ex]
\small SUNY Brockport\\[-0.5ex]
\small Brockport, New York\\[15pt]
Rebecca Smith
\\[-0.25ex]
\small Department of Mathematics\\[-0.5ex]
\small SUNY Brockport\\[-0.5ex]
\small Brockport, New York\\[-1.5ex]
}

\titleformat{\section}
        {\large\sc}
        {\thesection.}{1em}{}

\date{}

\begin{document}
\maketitle

\pagestyle{main}

\newcommand{\s}{\mathbf{s}}
\newcommand{\m}{\mathbf{m}}
\renewcommand{\t}{\mathbf{t}}
\renewcommand{\b}{\mathbf{b}}
\newcommand{\f}{\mathbf{f}}
\newcommand{\rev}{\operatorname{rev}}
\newcommand{\dual}{\operatorname{dual}}
\newcommand{\D}{\mathcal{D}}
\newcommand{\Av}{\operatorname{Av}}

\newcommand{\R}{\stackrel{R}{\sim}}
\renewcommand{\L}{\stackrel{L}{\sim}}
\newcommand{\notR}{\stackrel{R}{\not\sim}}
\newcommand{\notL}{\stackrel{L}{\not\sim}}

\newcommand{\OEISlink}[1]{\href{http://oeis.org/#1}{#1}}
\newcommand{\OEISref}{\href{http://oeis.org/}{OEIS}~\cite{sloane:the-on-line-enc:}}
\newcommand{\OEIS}[1]{(Sequence \OEISlink{#1} in the \OEISref.)}

\def\sdwys #1{\xHyphenate#1$\wholeString}
\def\xHyphenate#1#2\wholeString {\if#1$%
\else\say{\ensuremath{#1}}\hspace{2pt}%
\takeTheRest#2\ofTheString
\fi}
\def\takeTheRest#1\ofTheString\fi
{\fi \xHyphenate#1\wholeString}
\def\say#1{\begin{turn}{-90}\ensuremath{#1}\end{turn}}

\newenvironment{onestack}
{
	\begin{scriptsize}
	\psset{xunit=0.0355in, yunit=0.0355in, linewidth=0.02in}
	\begin{pspicture}(0,-2)(32,20)
	\psline{c-c}(0,15)(10,15)
	\psline{c-c}(13,15)(13,2)(19,2)(19,15)
	\psline{c-c}(22,15)(32,15)
	\rput[l](-0.5,12.5){\mbox{output}}
	\rput[r](32,12.5){\mbox{input}}
}
{
	\end{pspicture}
	\end{scriptsize}
}

\newcommand{\fillstack}[3]{%
	\rput[l](-0.5,17.5){\ensuremath{#1}}
	\rput[c](16.1, 7.5){\begin{sideways}{\sdwys{#2}}\end{sideways}}
	\rput[r](32,17.5){\ensuremath{#3}}
}

\newcommand{\stackinput}{%
	\psline[linecolor=darkgray]{c->}(24, 17.5)(16, 17.5)(16, 14)
}
\newcommand{\stackshortinput}{%
	\psline[linecolor=darkgray]{c->}(22.5, 17.5)(16, 17.5)(16, 14)
}
\newcommand{\stackoutput}{%
	\psline[linecolor=darkgray]{c->}(16, 14)(16, 17.5)(10, 17.5)
}
\newcommand{\stackinoutput}{%
	\psline[linecolor=darkgray]{c->}(24, 17.5)(17, 17.5)(17, 13)
	\psline[linecolor=darkgray]{c->}(15,14)(15, 17.5)(10, 17.5)
}

\newcommand{\firstpass}{%
	\rput[c](16, -.5){\text{First pass through the stack.}}
}
\newcommand{\secondpass}{%
	\rput[c](16, -.5){\text{Second pass through the stack.}}
}
\newcommand{\thirdpass}{%
	\rput[c](16, -.5){\text{Third pass through the stack.}}
}

\begin{abstract}
We consider the number of passes a permutation needs to take through a stack if we only pop the appropriate output values and start over with the remaining entries in their original order.  We define a permutation $\pi$ to be $k$-pass sortable if  $\pi$ is sortable using $k$ passes through the stack.  Permutations that are $1$-pass sortable are simply the stack sortable permutations as defined by Knuth.  We define the permutation class of $2$-pass sortable permutations in terms of their basis.  We also show all $k$-pass sortable classes have finite bases by giving bounds on the length of a basis element of the permutation class for any positive integer $k$.  Finally, we define the notion of tier  of a permutation $\pi$ to be the minimum number of passes \emph{after} the first pass required to sort $\pi$. We then give a bijection between the class of permutations of tier $t$ and a collection of integer sequences studied by Parker~\cite{Parker_thesis}.  This gives an exact enumeration of tier $t$ permutations of a given length and thus an exact enumeration for the class of $(t+1)$-pass sortable permutations. Finally, we give a new derivation for the generating function in \cite{Parker_thesis} and an explicit formula for the coefficients.
\end{abstract}

\section{Introduction}

We begin with the notion of permutation (or pattern) containment.

\begin{definition}  A permutation $\pi=\pi_1\pi_2\dots\pi_n\in S_n$ is said to \emph{contain} a permutation $\sigma=\sigma_1\sigma_2\ldots\sigma_k$ if there exist indices $1\le \alpha_1<\alpha_2< \ldots <\alpha_k \le n$ such that $\pi_{\alpha_i} < \pi_{\alpha_j}$ if and only if $\sigma_i <\sigma_j$.  Otherwise, we say $\pi$ \emph{avoids} $\sigma$.
\end{definition}

\begin{example}  The permutation $\pi = 4127356$ contains $231$ since the $4,7,3$ appear in the same relative order as $2,3,1$.   However, $\pi$ avoids $321$ since there is no decreasing subsequence of length three in $\pi$.
\end{example}

A stack is a last-in first-out sorting device that utilizes push and pop operations.  In Volume $1$ of \emph{The Art of Computer Programming}~\cite{knuth:the-art-of-comp:1}, Knuth showed that the permutation $\pi$ can be sorted (meaning that by applying push and pop operations to the sequence of entries $\pi_1,\dots,\pi_n$ one can output the sequence $1,\dots,n$) if and only if $\pi$ avoids the permutation $231$.  Shortly thereafter Tarjan~\cite{tarjan:sorting-using-n:}, Even and Itai~\cite{even:queues-stacks-a}, Pratt~\cite{pratt:computing-permu:}, and Knuth himself in \emph{Volume 3}~\cite{knuth:the-art-of-comp:3} studied sorting machines made up of multiple stacks in series or in parallel.

Classifying the permutations that are sortable by such a machine is one of the key areas of interest in this field.  To better do so, we will use the following definitions.

\begin{definition}
A \emph{permutation class} is a downset of permutations under the containment order.  Every permutation class can be specified by the set of minimal permutations which are \emph{not} in the class called its \emph{basis}.  For a set $B$ of permutations, we denote by $\Av(B)$ the class of permutations which do not contain any element of $B$.
\end{definition}

For example, Knuth's result says that the stack-sortable permutations are precisely $\Av(231)$, that is the basis for the stack sortable permutations is $\{231\}$.  Given most naturally defined sorting machines, the set of sortable permutations forms a class.  This is often because a subpermutation of a sortable permutation can be sorted by ignoring the operations corresponding to absent entries.~\footnote{An exception is West's notion of $2$-stack-sortability~\cite{west:sorting-twice-t:}, which is due to restrictions on how the machine can use its two stacks.  Namely this machine prioritizes keeping large entries from being placed above small entries.  Because of this limitation, this machine can sort $35241$, but not its subpermutation $3241$.}

Given that the class of permutations sortable by a single stack have a basis of only one element, namely $231$, expecting that the sortable permutations for a network made up of more than one stack would also have a finite basis seems reasonable.
However, this is not the case for machines made up $k\ge 2$ stacks in series or in parallel.  That these machines must have infinite bases was shown by Murphy~\cite{murphy:restricted-perm:} and Tarjan~\cite{tarjan:sorting-using-n:}, respectively.  Moreover, the exact enumeration question is unknown; see Albert, Atkinson, and Linton~\cite{albert:permutations-ge:} for the best known bounds. For a general overview of stack sorting, we refer the reader to a survey by B\'ona~\cite{bona:a-survey-of-sta:}.

In part because of the difficulties noted above, numerous researchers have considered weaker machines.  Atkinson, Murphy, and Ru\v{s}kuc~\cite{atkinson:sorting-with-tw:} considered sorting with two \emph{increasing} stacks in series, i.e., two stacks whose entries must be in increasing order when read from top to bottom\footnote{Even without this restriction, the final stack must be increasing if the sorting is to be successful.}.  They characterized the permutations this machine can sort with an infinite list of forbidden patterns, and also found the enumeration of these permutations.  Interestingly, these permutations are in bijection with the $1342$-avoiding permutations previously counted by B\'ona~\cite{bona:exact-enumerati:}.  Similarly, the third author~\cite{smith:a-decreasing-st:} studied a machine where the first stack must have entries in decreasing order when read from top to bottom.  This permutation class of sortable permutations was shown to be $\Av(3241,3142)$, known to be enumerated by the Schr\"oder numbers by Kremer~\cite{kremer:permutations-wi:,kremer:postscript:-per:} and later an explicit bijection was given by Schroeder and the third author~\cite{schroder}.
A different version, sorting with a stack of depth $2$ followed by a standard stack (of infinite depth), was studied by Elder~\cite{elder:permutations-ge:}.  He characterized the sortable permutations with a finite list of forbidden patterns, but did not enumerate these permutations.

We apply a sorting algorithm on a stack whereby the entries of the permutation are pushed into the stack in the usual way.  We remove or pop an entry $\pi_i$ from the stack only if $\pi_i$ are the next needed entry for the output (namely, $\pi_i$ is the next entry of the identity permutation).
That is, we will allow larger entries to be placed above smaller entries, but we will not allow entries to be pushed to the output prematurely.  In particular, this means that if a permutation contains the pattern $231$, then there will be entries left in the stack after all legal moves have been made.  In this case, the algorithm is repeated on the stack working from the bottom of the stack to the top. That is, the remaining entries are returned to the input to be read in the same order they were the first time.

In some respects this sorting algorithm behaves similarly to West's algorithm for $2$-stack-sorting~\cite{west:sorting-twice-t:} where entries were run through a single stack twice.  However, unlike West's algorithm that prioritizes keeping the stack in increasing order, we will prioritize only outputting the proper entries.  The main advantage is that our sortable permutations will form a permutation class, as deleting entries from a permutation will not impede its ability to be sorted by this algorithm.

\begin{definition}  We will call each repetition of the algorithm used to sort a permutation a \emph{pass}.  Further, the \emph{tier} of a permutation $\sigma$ will refer to the minimum number of times we need to restart the sorting; that is, the \emph{tier} is one less than the minimal number of passes necessary to sort $\sigma$. We use $t(\sigma)$ to denote the tier of the permutation $\sigma$.
\end{definition}

\begin{example}
The permutation $231$ has tier $t(231)=1$, all other elements of $S_3$ have tier $0$.
\end{example}

We translate the original stack sortable requirement to the following theorem.

\begin{theorem}~\label{tier_0}  (Knuth) A permutation $\pi$ has tier $0$, that is $\pi$ can be sorted via single pass through the stack, if and only if $\pi$ avoids the pattern $231$.
\end{theorem}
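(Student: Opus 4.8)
The plan is to analyze the deterministic single pass directly and prove the two implications separately, working throughout from the observation that because we only ever pop the entry that is the next required output value, the sequence sent to the output during a pass is automatically increasing. Consequently the pass sorts $\pi$ in one go if and only if the stack is empty when the input is exhausted, and ``$\pi$ has tier $0$'' is equivalent to ``the single pass leaves the stack empty.''

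For the forward direction (avoiding $231$ forces tier $0$) I would argue by induction on $n$, using the standard decomposition around the largest entry. Writing $\pi = L\, n\, R$ where $n = \pi_p$ is the maximum, the hypothesis that $\pi$ avoids $231$ means no entry of $L$ can exceed an entry of $R$ (such a configuration together with $n$ would be an occurrence of $231$); hence $L$ is a $231$-avoiding permutation of $\{1,\dots,p-1\}$ and $R$ is a $231$-avoiding permutation of $\{p,\dots,n-1\}$. While the pass reads $L$, the next required output values are exactly $1,2,\dots,p-1$, and since pops depend only on the stack top and the next required value, the pass acts on $L$ exactly as it would on $L$ in isolation; by induction it empties the stack having output $1,\dots,p-1$. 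The entry $n$ is then pushed onto the empty stack, and while $R$ is read, $n$ sits harmlessly at the bottom because $n$ is the last value needed; thus the pass acts on $R$ as it would in isolation and, again by induction, outputs $p,\dots,n-1$, after which $n$ is popped.

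For the reverse direction I would show the contrapositive in its sharpest form: if $\pi$ contains $231$, say at positions $i<j<k$ with $\pi_k<\pi_i<\pi_j$, then $\pi_i$ and $\pi_j$ are deadlocked. Indeed $\pi_i$ cannot be output before $\pi_k$ (which is smaller and still in the input until step $k$), so $\pi_i$ is still in the stack when $\pi_j$ is read at step $j<k$, forcing $\pi_j$ to be pushed above $\pi_i$. But now $\pi_j$ can only be popped once every smaller value, in particular $\pi_i$, has been output, while $\pi_i$ can only be reached after the entry $\pi_j$ above it is removed. Neither can move, so both remain in the stack at the end of the pass; the stack is nonempty and $\pi$ is not tier $0$.

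The main obstacle I anticipate is in the forward direction, namely making rigorous the two ``non-interference'' claims: that the behavior of the pass on the block $L$ is unaffected by the entries $n$ and $R$ that follow it, and that the single entry $n$ resting at the bottom of the stack does not disturb the sorting of $R$. Both reduce to the same underlying fact that a pop is triggered solely by the coincidence of the stack top with the current next-required value, so that entries which are needed strictly later can neither be popped prematurely nor block the entries above them; once this is stated carefully the induction closes, with the base cases $n\in\{0,1\}$ being immediate.
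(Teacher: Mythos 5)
Your proof is correct, but note that the paper itself gives no proof of Theorem~\ref{tier_0}: it is stated as Knuth's classical result and cited to \emph{The Art of Computer Programming}, so there is nothing in the source to match your argument against line by line. Your write-up is a sound, self-contained derivation, and importantly it proves the statement for the paper's \emph{deterministic} greedy pass (pop only when the stack top is the next required output value), which is slightly stronger than verifying Knuth's original free push/pop formulation and then observing the greedy strategy is optimal; the deadlock argument in your reverse direction and the two non-interference claims in your forward direction are exactly the points that need care, and you have identified and resolved both (a pop is triggered solely by the coincidence of the stack top with the next required value, so entries needed strictly later neither move nor block). The closest the paper comes to your reverse direction is Proposition~\ref{sep_pair_prop} together with Theorem~\ref{tier_num_pairs}, where the obstruction to a single pass is repackaged as a \emph{separated pair} $(i+1,i)$ rather than a generic $231$ occurrence; that reformulation is what the paper actually needs downstream (it localizes the deadlock to a consecutive pair of values, which is what makes the tier count additive over passes), whereas your direct decomposition $\pi = L\,n\,R$ buys a clean inductive proof of the base case but does not by itself generalize to counting the number of additional passes. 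Both routes are valid; yours is the more elementary and self-contained for this one theorem.
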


We note that another way to think of our sorting machine when applied to a permutation with tier $t$ is as a network of $t+1$ input-restricted deques in series with a special output condition. Namely, entries of our permutation may only enter the top of the deque and then either exit the top of the deque to go to the output (immediately passing through the other deques if one is so inclined), or exit the bottom of the deque (and enter the top of the next deque) when no more entries are available to enter the deque and the top entry of the deque is not the next entry to be output.

\section{Separable pairs and permutation classes}

In order to investigate the tiers of permutations more generally, we first give an explicit condition on permutations that describes their tier.

\begin{definition} Let $\sigma \in S_m$ and let $i \in \{1, 2, ... ,n - 1\}$. We say that the integers $(i + 1, i)$ are a \emph{separated pair} in a permutation $\sigma \in S_n$ if there is a subsequence in $\sigma$ of the form $(i + 1, k, i)$ where $k > i + 1$.
\end{definition}

Equivalently one could say that $(i + 1, i)$ are a separated pair in $\sigma$ if they occur as part of a $231$ pattern where $i + 1$ is the middle valued number and $i$ is the smallest number in the pattern. We may also say that the element $k$ \emph{separates} $i + 1$ and $i$. The avoidance of a separated pair in a permutation $\sigma$ is in fact equivalent to $\sigma$ avoiding the pattern $231$.  In particular, what we will prove in Proposition~\ref{sep_pair_prop} is equivalent to Lemma 2 (after taking the complement of the inverse of the permutations involved) of a paper by Claesson~\cite{claesson:generalized-pat}.  Claesson was studying what was known then as \emph{generalized pattern} avoidance, introduced by Babson and Steingr{\'{\i}}msson~\cite{babson:generalized-per:}.  Now such patterns are known by the less misleading term, \emph{covincular}.  For a thorough study of such pattern avoidance we refer the reader to the survey~\cite{einar:survey} by Steingr{\'{\i}}msson and book~\cite{kitaev:book} by Kitaev.

\begin{example}  The permutation $\pi = 5412736$ contains two $231$ patterns, namely $573$ and $473$.   However, $\pi$ contains only one separated pair, namely $(4,3)$.\end{example}

\begin{proposition}
\label{sep_pair_prop}
A permutation $\pi$ has tier $t(\pi) >0$, that is $\pi$ cannot be sorted in a single pass through the stack, if and only if $\pi$ contains a separated pair.
\end{proposition}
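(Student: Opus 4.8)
The plan is to prove both directions by connecting the combinatorial condition (containing a separated pair) to the behavior of the stack during the first pass. By Theorem~\ref{tier_0}, $\pi$ has tier $t(\pi)>0$ precisely when $\pi$ cannot be sorted in a single pass, which by Knuth's characterization happens exactly when $\pi$ contains a $231$ pattern. So the real content is to show that \emph{containing a $231$ pattern is equivalent to containing a separated pair}. One direction is immediate: a separated pair $(i+1,i)$ witnessed by a subsequence $(i+1,k,i)$ with $k>i+1$ is literally a $231$ pattern, so containing a separated pair forces tier $>0$.

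For the harder direction, I would assume $\pi$ contains some $231$ pattern and produce a separated pair. The obstacle is that an arbitrary $231$ pattern uses three values $a<b<c$ appearing in the order $b,c,a$, but these need not be \emph{consecutive} integers, whereas a separated pair requires the specific consecutive values $i+1$ and $i$ with a strictly larger entry between them. So I cannot simply read off a separated pair from a given $231$; I must argue that among all the $231$ patterns, one of a special form must exist. The natural approach is an extremal/minimality argument: among all pairs of values $(b,a)$ with $a<b$ such that $b$ occurs before $a$ in $\pi$ and some larger value $c>b$ lies strictly between them (i.e., all ``$231$-participating'' value-pairs in the bottom-and-middle roles), choose one minimizing $b-a$. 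I would then show this minimal difference must equal $1$, yielding consecutive values $i+1=b$ and $i=a$, which is exactly a separated pair.

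The key step is verifying the minimality forces $b-a=1$. Suppose $b-a\ge 2$, so there is an integer $v$ with $a<v<b$. I would examine where $v$ sits in $\pi$ relative to the positions of $b$, $c$, and $a$ (recall $b$ precedes $c$ precedes $a$ with $c>b>v>a$), and argue by cases on the position of $v$ that one obtains a strictly smaller qualifying pair, contradicting minimality. For instance, if $v$ appears after $a$, then $(b,a)$ can be replaced by considering $v$ with $a$ or $b$ with $v$; if $v$ appears before $b$ or between the relevant positions, a suitable sub-configuration again gives a smaller difference. Carefully enumerating these position cases and checking that each produces a valid separated-pair-candidate with smaller value-gap is the main technical labor, but each case reduces to observing that inserting an intermediate value $v$ into a $231$-type configuration always leaves a smaller-gap $231$-type configuration on one side.

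Alternatively, and perhaps more cleanly, I would phrase the argument directly in terms of consecutive values: for each $i$, ask whether $i+1$ occurs before $i$ in $\pi$, and if so whether some entry exceeding $i+1$ separates them. If $\pi$ contains a $231$ but \emph{no} separated pair, I would derive a contradiction by descending from the given $231$ pattern through consecutive values. This rephrasing makes the induction on the value-gap transparent and matches the cited equivalence to Claesson's Lemma~2 (under complement-of-inverse), so I would point to that correspondence as a sanity check on the statement while giving the self-contained extremal argument above.
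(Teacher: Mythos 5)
Your proposal is correct and takes essentially the same route as the paper: the paper likewise reduces the statement to showing that containing a $231$ is equivalent to containing a separated pair, and proves the converse direction by a descent on the value gap (repeatedly replacing the middle value $i$ of a $231$ pattern $(i,j,k)$ by $i-1$ until the gap closes), which is just the inductive phrasing of your minimal-gap extremal argument. Your case analysis does go through, and in fact only the position of the intermediate value $v$ relative to the separator $c$ matters: if $v$ precedes $c$ then $(v,a)$ is a qualifying pair of smaller gap, and otherwise $(b,v)$ is.
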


\begin{proof} We simply prove that containing a $231$ pattern is equivalent to having a separated pair. If the permutation $\pi$ contains a separated pair then the separated pair along with their ``separator'' is a $231$ pattern. Conversely, assume $\pi$ contains a $231$ pattern which we denote $(i, j, k)$ with $k < i < j$. If $k = i - 1$ we are done. Otherwise consider the position of $i - 1$ relative to $j$. If $i - 1$ is on the same side of $j$ as $k$ then $(i, i - 1)$ form a separated pair in $\pi$. Otherwise, $i - 1$ is on the same side of $j$ as $i$ in which case we have another $231$ pattern in $\pi$, $(i - 1, j, k)$. We continue inductively to see $\pi$ must contain a separated pair.
\end{proof}

More generally, we see that the number of separated pairs in a permutation characterizes its tier.

\begin{theorem}~\label{tier_num_pairs}
The tier of a permutation under this sorting algorithm is exactly the number of separated pairs in the permutation.
\end{theorem}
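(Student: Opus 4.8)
The plan is to induct on the number of separated pairs, reducing the whole statement to one key claim: \emph{a single pass through the stack removes exactly one separated pair}. Write $s(\pi)$ for the number of separated pairs of $\pi$, and let $\pi'$ denote the permutation formed by the entries that remain after one pass, listed in their original relative order (equivalently, read from the bottom of the stack to the top). Since $\pi'$ is order-isomorphic to a sub-permutation of $\pi$ occupying a consecutive block of values, it suffices to prove $s(\pi')=s(\pi)-1$ whenever $s(\pi)\ge 1$; the case $s(\pi)=0$ is exactly Knuth's Theorem~\ref{tier_0} with Proposition~\ref{sep_pair_prop}, which together say $\pi$ is sorted in the single pass. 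Granting the claim, an induction on $s(\pi)$ finishes the argument: if $s(\pi)=s\ge 1$, then sorting $\pi$ amounts to performing one pass and then sorting $\pi'$, so $\pi$ needs one more pass than $\pi'$; since $s(\pi')=s-1$, the inductive hypothesis gives that $\pi'$ has tier $s-1$ and so needs $s$ passes, whence $\pi$ needs $s+1$ passes and has tier $s$.

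To prove the claim, I first describe one pass precisely. A pass begins with an empty stack, with the smallest not-yet-output value equal to some $v$. I would argue that the values output during the pass are exactly $v,v+1,\dots,j-1$, where $j$ is the smallest \emph{blocked} value, meaning the first value that is buried in the stack (not on top) at the moment it becomes the next needed output. The point is that a value still sitting in the input can always be reached by further pushing, so the pass can only stall on a value already pushed and later covered. The heart of the matter is to match this stalling value $j$ with a separated pair: at the instant $j-1$ is popped, the entry directly beneath it must be some $k>j$ lying above $j$ in the stack, and tracing $j,k,j-1$ back to the input exhibits the subsequence $(j,k,j-1)$ with $k>j$, so $(j,j-1)$ is a separated pair.

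Next I would show $(j,j-1)$ is the \emph{only} separated pair destroyed. Two facts are needed. First, every separated pair $(a+1,a)$ forces $a+1$ to be blocked: when $a$ is output, both $a+1$ and its separator $k$ are still in the stack with something $\ge k>a+1$ directly above $a+1$, so $a+1$ cannot reach the top in this pass. Hence $j\le a+1$, i.e.\ every separated pair has lower element $a\ge j-1$, with equality only for $(j,j-1)$. Since the values output this pass are precisely $v,\dots,j-1$, the only separated pair whose smaller element is removed is $(j,j-1)$, while no output value can serve as the larger element of a separated pair, those all being $\ge j$. Second, the separated pairs among the surviving values $\{j,\dots,n\}$ are preserved verbatim in $\pi'$: consecutiveness of a pair and the inequality $k>a+1$ are statements about values $\ge j$ whose relative order is unchanged by deleting the smaller values, and conversely every separated pair of $\pi'$ already occurs in $\pi$. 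Combining, $s(\pi')=s(\pi)-1$.

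The main obstacle is the bookkeeping inside the claim, specifically establishing the ``exactly one'' rather than merely ``at least one.'' This rests on cleanly identifying the stalling value $j$ with the minimal lower element of a separated pair, and on checking that deleting the output prefix neither creates new separated pairs nor merges old ones. Once the stack dynamics of a single pass are pinned down, both the composition of passes and the final induction are routine.
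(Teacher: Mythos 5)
Your proof is correct and takes essentially the same route as the paper's: both arguments identify that a single pass stalls exactly at the smallest separated pair $(j,j-1)$, outputs everything below it, and leaves a remaining permutation with precisely one fewer separated pair, so that the extra passes correspond bijectively to the separated pairs. Your version is more detailed and carefully justified than the paper's rather terse proof, but the key idea --- each restart is matched to the minimal separated pair among the remaining values --- is the same.
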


\begin{proof}
Assume the permutation $\pi$ has $t$ separated pairs. We note that if $t = 0$, then we know $\pi$ is sortable from Proposition~\ref{sep_pair_prop} and hence the tier of $\pi$ is zero.

Consider a pass through the stacks where $i$ is pushed to the output, but $i+1$ is not.  The sorting algorithm will need to be restarted in this scenario if and only if $i + 1$ lies in the stack below another number, say $j$ which must necessarily be larger than $i + 1$ since the algorithm has reached $i$. Hence $(i + 1, i)$ was a separated pair in $\pi$. Moreover, $(i + 1,i)$ must be the smallest separated pair in $\pi$ for $i$ to be pushed to the output in this pass.  Restarting then continues with the remaining permutation values larger than $i$.

Therefore each extra pass corresponds to a unique separated pair.
\end{proof}

\begin{example}  The permutation $\pi=356124$ has two separated pairs, $(3,2),(5,4)$ and thus has tier $2$.  We show the sorting of $\pi$ using three passes through a stack in Figure~\ref{fig_356124}.
\end{example}

\begin{figure}[t]
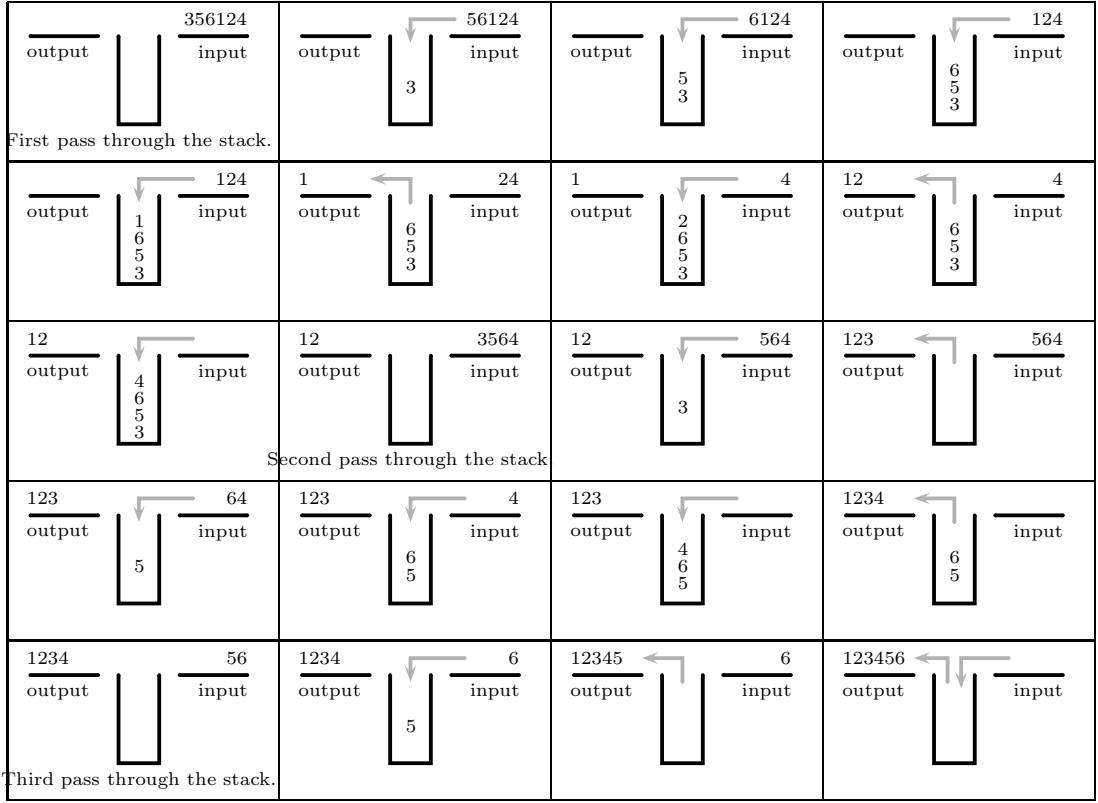

\begin{center}

\begin{tabular}{|c|c|c|c|}
\hline
\begin{onestack}
\fillstack{}{}{356124}
\firstpass
\end{onestack}
&
\begin{onestack}
\fillstack{}{3}{56124}
\stackshortinput
\end{onestack}
&
\begin{onestack}
\fillstack{}{35}{6124}
\stackinput
\end{onestack}
&
\begin{onestack}
\fillstack{}{356}{124}
\stackinput
\end{onestack}
\\\hline
\begin{onestack}
\fillstack{}{3561}{124}
\stackinput
\end{onestack}
&
\begin{onestack}
\fillstack{1}{356}{24}
\stackoutput
\end{onestack}
&
\begin{onestack}
\fillstack{1}{3562}{4}
\stackinput
\end{onestack}
&
\begin{onestack}
\fillstack{12}{356}{4}
\stackoutput
\end{onestack}
\\\hline
\begin{onestack}
\fillstack{12}{3564}{}
\stackinput
\end{onestack}
&
\begin{onestack}
\fillstack{12}{}{3564}
\secondpass
\end{onestack}
&
\begin{onestack}
\fillstack{12}{3}{564}
\stackinput
\end{onestack}
&
\begin{onestack}
\fillstack{123}{}{564}
\stackoutput
\end{onestack}
\\\hline
\begin{onestack}
\fillstack{123}{5}{64}
\stackinput
\end{onestack}
&
\begin{onestack}
\fillstack{123}{56}{4}
\stackinput
\end{onestack}
&
\begin{onestack}
\fillstack{123}{564}{}
\stackinput
\end{onestack}
&
\begin{onestack}
\fillstack{1234}{56}{}
\stackoutput
\end{onestack}
\\\hline
\begin{onestack}
\fillstack{1234}{}{56}
\thirdpass
\end{onestack}

&
\begin{onestack}
\fillstack{1234}{5}{6}
\stackinput
\end{onestack}
&
\begin{onestack}
\fillstack{12345}{}{6}
\stackoutput
\end{onestack}
&
\begin{onestack}
\fillstack{123456}{}{}
\stackinoutput
\end{onestack}
\\\hline
\end{tabular}

\caption{Sorting the permutation $356124$ with $k=3$  passes through a stack.}
\label{fig_356124}
\end{center}
\end{figure}

The notion of separated pairs also allows one to more easily study the possible tiers of permutations that require more passes through the stacks. For example, the permutation $\pi=4637251$ contains the four separated pairs $(2, 1), (3, 2), (4, 3),$ and $(6, 5)$ hence $\pi$ is a tier $4$  permutation. Before continuing to investigate separated pairs, we note that the number of separated pairs are preserved in permutation classes.  This in turn shows that the $k$-pass sortable permutations form a permutation class for any value of $k$.

\begin{proposition}~\label{containment}
If $\sigma$ and $\tau$ are two permutations and $\sigma$ is contained in $\tau$ then $\tau$ has at least as many separated pairs as $\sigma$. Equivalently the number of separated pairs in $\sigma$ is less than or equal to the number in any permutation that contains it.
\end{proposition}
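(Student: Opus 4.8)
The plan is to construct an injection from the set of separated pairs of $\sigma$ into the set of separated pairs of $\tau$; the existence of such an injection immediately yields the inequality. Since a separated pair $(i+1,i)$ is completely determined by its lower value $i$, I will instead exhibit a strictly increasing (hence injective) map from the lower values of $\sigma$'s separated pairs to the lower values of $\tau$'s separated pairs.

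First I would fix an occurrence of $\sigma$ in $\tau$: indices $\alpha_1<\cdots<\alpha_m$ with $\tau_{\alpha_1}\cdots\tau_{\alpha_m}$ order-isomorphic to $\sigma$, and let $\beta$ denote the induced increasing bijection from the values $\{1,\dots,m\}$ of $\sigma$ onto the values $\{\tau_{\alpha_1},\dots,\tau_{\alpha_m}\}$ occurring in this occurrence. Given a separated pair $(i+1,i)$ of $\sigma$, its witnessing subsequence $(i+1,k,i)$ with $k>i+1$ maps under the occurrence to a subsequence of $\tau$ with values $\beta(i+1),\beta(k),\beta(i)$ in the same positional order; since $\beta$ is increasing and $i<i+1<k$, this is a $231$ pattern in $\tau$ whose middle (``$2$'') value is $\beta(i+1)$ and whose smallest (``$1$'') value is $\beta(i)$.

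The crucial step is to convert this $231$ pattern into a separated pair of $\tau$ whose values are \emph{localized}. Rerunning the argument in the proof of Proposition~\ref{sep_pair_prop} on this pattern produces a separated pair of $\tau$, but in fact more is true: that argument only ever decreases the ``$2$''-value while keeping the ``$1$''-value fixed, so the separated pair $(w+1,w)$ it returns satisfies $\beta(i)\le w<w+1\le\beta(i+1)$. Define $f(i)=w$, the lower value of this extracted pair. For two lower values $i<i'$ of separated pairs of $\sigma$ we have $i+1\le i'$, so $\beta(i+1)\le\beta(i')$, whence $f(i)\le\beta(i+1)-1<\beta(i+1)\le\beta(i')\le f(i')$. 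Thus $f$ is strictly increasing and therefore injective, and each value in its image is the lower value of a separated pair of $\tau$. Counting lower values then shows that $\sigma$ has no more separated pairs than $\tau$.

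The main obstacle is precisely this injectivity: Proposition~\ref{sep_pair_prop} by itself only guarantees that each $231$ pattern yields \emph{some} separated pair, which is far too weak to compare counts, since many distinct $231$ patterns could collapse to a single separated pair. The fix is to track where the extracted pair lives, and the needed localization $\beta(i)\le w<w+1\le\beta(i+1)$ is not contained in the \emph{statement} of Proposition~\ref{sep_pair_prop} but is visible from its proof, so I would either invoke that constructive content explicitly or reprove the short extraction inline. An alternative route would be to reduce to deleting a single entry and argue that deletion cannot increase the number of separated pairs; there the subtlety is that deleting a value $v$ makes the values $v-1$ and $v+1$ consecutive and can therefore \emph{create} a separated pair, which one must then charge against a pre-existing separated pair of $\tau$ involving $v$ using the same localized extraction.
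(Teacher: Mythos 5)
Your proof is correct and follows essentially the same route as the paper: fix an occurrence, push each separated pair of $\sigma$ through the constructive extraction in the proof of Proposition~\ref{sep_pair_prop} to get a separated pair of $\tau$ localized between the images of $i$ and $i+1$, and use the fact that consecutive values of $\sigma$ leave no occurrence-values between their images to get distinctness. Your explicit monotonicity argument for $f$ is in fact a slightly more careful rendering of the paper's terser ``hence the pairs are distinct'' step, and your observation that the needed localization lives in the \emph{proof} rather than the \emph{statement} of Proposition~\ref{sep_pair_prop} is exactly the point the paper is implicitly relying on.
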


\begin{proof}
The proof is simply a generalization of the argument in the proof or Proposition \ref{sep_pair_prop}. Assume $\sigma = \sigma_1 \sigma_2 ...\sigma_k$ is contained in $\tau = \tau_1\tau_2...\tau_n$. Let $\tau_{\sigma} = \tau_{\sigma_1} \tau_{\sigma_2},...\tau_{\sigma_k}$ be the subsequence of $\tau$ corresponding to the pattern $\sigma$. We argue that every separated pair in $\sigma$ forces a distinct separated pair in $\tau$.

Assume $(\sigma_i, \sigma_k)$ is a separated pair, i.e. $\sigma_i = \sigma_k + 1$ and there is a larger entry $\sigma_j$ which separates them. There must be a separated pair in $\tau$ say $(m + 1, m) $ with $\tau_{\sigma_k} \le m < \tau_{\sigma_i}$ by the same argument applied in Proposition~\ref{sep_pair_prop}.

Since $\sigma_i$ and $\sigma_k$ were consecutive integers in $\sigma$, there are no integers in the subsequence $\tau_{\sigma}$ between $\tau_{\sigma_k}$ and $\tau_{\sigma_i}$. Hence for every separated pair in $\sigma$ there is a distinct separated pair in $\tau$.  Thus the tier of $\tau$ is at least that of $\sigma$.
\end{proof}

\begin{corollary}
Given a nonnegative integer $t$, the permutations of tier less than or equal to $t$ form a permutation class.  That is, the $k$-pass sortable permutations form a permutation class for any positive integer $k$.
\end{corollary}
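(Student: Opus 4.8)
The plan is to unpack the definition of a permutation class and then invoke the monotonicity of tier established in Proposition~\ref{containment}. Recall that a permutation class is precisely a downset under the containment order, so to prove the first statement it suffices to show that the set $\mathcal{C}_t$ of permutations with tier at most $t$ is closed under taking subpermutations. That is, I need to verify that whenever $\tau \in \mathcal{C}_t$ and $\sigma$ is contained in $\tau$, we also have $\sigma \in \mathcal{C}_t$.

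This closure is immediate from Proposition~\ref{containment}: if $\sigma$ is contained in $\tau$, then the number of separated pairs in $\sigma$ is at most the number in $\tau$. Since Theorem~\ref{tier_num_pairs} identifies the tier with the number of separated pairs, this says exactly that $t(\sigma) \le t(\tau)$. Hence if $t(\tau) \le t$, then $t(\sigma) \le t(\tau) \le t$, so $\sigma \in \mathcal{C}_t$, and $\mathcal{C}_t$ is therefore a downset, i.e.\ a permutation class.

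For the second statement, I would translate $k$-pass sortability into a condition on tier. By the definition of tier as one less than the minimal number of passes needed to sort a permutation, a permutation is $k$-pass sortable exactly when its tier is at most $k-1$. Thus the collection of $k$-pass sortable permutations coincides with $\mathcal{C}_{k-1}$, which is a permutation class by the first part.

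Since essentially all of the combinatorial content has already been absorbed into Proposition~\ref{containment}, there is no genuine obstacle here; the statement is a formal consequence of the monotonicity of tier under containment together with the definition of a class as a downset. The only point requiring any care is the bookkeeping offset between ``tier'' and ``number of passes'' (tier $t$ corresponds to $t+1$ passes), which must be tracked so that the index $k-1$ appears correctly when identifying the $k$-pass sortable permutations with $\mathcal{C}_{k-1}$.
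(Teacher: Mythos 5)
Your proposal is correct and follows essentially the same route as the paper: both arguments are immediate consequences of the monotonicity of the number of separated pairs (hence of tier) under containment from Proposition~\ref{containment}, combined with the identification of tier with the count of separated pairs in Theorem~\ref{tier_num_pairs}. The paper phrases the conclusion as ``tier at most $t$ means avoiding all permutations of tier $t+1$'' while you verify the downset property directly, but this is only a cosmetic difference, and your handling of the offset between tier and number of passes is correct.
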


The proof is simply to note that the permutations of tier at most $t$ are those that avoid all permutations of tier $(t+1)$.  This yields another way to see that the $k$-pass sortable permutations form a permutation class for any positive integer $k$.

\subsection{The basis for 2-pass sortable permutations}

We now classify permutations that have maximum tier $t=1$.  That is, if we allow ourselves a maximum of one re-use of the stack to complete the sorting process, which permutations are sortable?

\begin{theorem}
\label{thm_t_1} A permutation $\pi$ is $2$-pass sortable, i.e. $t(\pi) \leq 1$, if and only if $\pi$ avoids
$$ 24153, 24513, 24531, 34251, 35241, 42513,   42531,45231, 261453, 231564, 523164.$$
\end{theorem}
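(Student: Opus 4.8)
The plan is to recast the statement entirely in terms of separated pairs. By Theorem~\ref{tier_num_pairs} the tier of $\pi$ equals its number of separated pairs, so $t(\pi)\le 1$ holds if and only if $\pi$ has at most one separated pair. The theorem is therefore equivalent to the claim that the eleven listed permutations are exactly the minimal permutations (under containment) possessing two separated pairs; that is, they constitute the basis of the class of permutations with at most one separated pair. I would prove the two inclusions separately.

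The necessity of avoidance is a finite verification. For each of the eleven permutations I would exhibit its two separated pairs; for instance $24153$ has $(2,1)$ (separated by $4$) and $(4,3)$ (separated by $5$), $34251$ has $(3,2)$ and $(2,1)$, and $231564$ has $(2,1)$ and $(5,4)$. Since each listed permutation carries at least two separated pairs, Proposition~\ref{containment} shows that any permutation containing one of them has at least two separated pairs, hence tier at least $2$, and so is not $2$-pass sortable. Thus avoiding all eleven patterns is necessary.

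For sufficiency I would argue the contrapositive: every permutation with two separated pairs contains one of the eleven. Suppose $\pi$ has separated pairs $(a+1,a)$ and $(c+1,c)$ with $a<c$, and separators $s_1>a+1$ and $s_2>c+1$ lying by position strictly between the members of their respective pairs. Restricting $\pi$ to the set of values $\{a,a+1,c,c+1,s_1,s_2\}$ (some of which may coincide) yields a sub-permutation of length at most $6$; since no chosen value lies strictly between $a$ and $a+1$ or between $c$ and $c+1$, both pairs survive as separated pairs of the restriction, exactly as in the argument of Proposition~\ref{sep_pair_prop}. It therefore suffices to classify the minimal permutations of length at most $6$ that carry two separated pairs.

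The heart of the proof is then an exhaustive case analysis on how the two pairs and their separators overlap, together with their relative positions: whether the pairs share a value (the consecutive case $c=a+1$, where $a,a+1,a+2$ support $(a+1,a)$ and $(a+2,a+1)$, producing patterns such as $34251$ and $35241$); whether a separator of one pair equals a member of the other (e.g.\ $s_1=c+1$, producing $24153$); whether the two pairs share a single separator $s_1=s_2$ (producing $42513$); or whether all six values are distinct, in which case the fully disjoint configurations yield the three length-$6$ elements $261453$, $231564$, and $523164$. In each branch one checks that the forced relative order of the surviving entries contains one of the listed patterns. I expect this elementary but intricate case analysis to be the main obstacle: the delicate points are confirming that the enumeration of overlap types and relative orderings is genuinely exhaustive, that each listed permutation is minimal (deleting any single entry destroys one of its two separated pairs, so none is contained in another), and hence that the eleven permutations form pairwise-incomparable obstructions that are precisely the basis.
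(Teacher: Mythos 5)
Your proposal is correct and follows essentially the same route as the paper: both reduce the statement to separated pairs via Theorem~\ref{tier_num_pairs}, establish necessity by exhibiting two separated pairs in each listed pattern (with Proposition~\ref{containment} guaranteeing heredity), and establish sufficiency by an exhaustive finite case analysis of how two separated pairs and their separators can be configured in a permutation of length at most $6$. The only difference is organizational --- the paper normalizes the smaller pair to $(2,1)$ and cases on the six relative positional orders of the four pair members, while you case on value coincidences among the (at most) six relevant entries --- and in both versions the remaining work is the same routine but necessary verification that every configuration contains one of the eleven patterns (e.g.\ that non-minimal candidates such as $564231$ and $251463$ reduce to $45231$ and $24153$) and that each of the eleven is itself minimal.
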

\begin{proof}
First one can check that each of the listed permutations
has exactly two separated pairs.  Moreover, the removal of any entry in any of these permutations also removes at least one separated pair.  Hence by Theorem~\ref{tier_num_pairs}, these permutations are all minimal basis elements for the class of $2$-pass sortable permutations.

Next we note that any permutation contained in the basis for $2$-pass sortable permutations cannot have more than two separated pairs.  For otherwise, if we remove the smallest entry of the smallest separated pair in such a permutation, then the resulting permutation has exactly one less separated pair and is still not $2$-pass sortable.

Hence every basis element has exactly two separated pairs.  For a permutation $\sigma$ to meet the minimal length requirement, one of the separated pairs in $\sigma$ must be $(2,1)$.

In one case, the basis element $\sigma$ has two separated pairs of the form $(2,1)$ and $(3,2)$.  As $3,2,1$ must appear in descending order, the basis elements in this instance are $34251$ and $35241$.

Otherwise, suppose the two separated pairs are $(2,1)$ and $(b+1, b)$ where $b>2$.  There are then six possible relative orders of these elements in a basis element $\sigma$.     For clarity, we consider each of these cases separately.

Case 1:  The permutation $\sigma$ contains the subsequence $(b+1)b21$.  However, then the only other entries $\sigma$ would require are the separators which makes $\sigma=564231$.  However, this permutation is not minimal as it contains $45231$ as a pattern.

Case 2:  The permutation $\sigma$ contains the subsequence $(b+1)2b1$.  Then we need only separate the pair $(b+1,b)$ and thus we have either $45231$ or $42531$.

Case 3:  The permutation $\sigma$ contains the subsequence $(b+1)21b$.  Then we need to separate both pairs, possibly with a single entry between the $2$ and the $1$ if it is large enough.  A larger entry cannot appear immediately following $b+1$ for then $b$ is redundant because $b-1$ also follows this larger entry.  As such, the permutations obtained here are only $42513, 523164$. 

Case 4:  The permutation $\sigma$ contains the subsequence $2(b+1)b1$.  We just need to separate the pair $(b+1,b)$, and the only possibility is $24531$.

Case 5:  The permutation $\sigma$ contains the subsequence $2(b+1)1b$.  Here we also need only separate the pair $(b+1,b)$.  Hence we have $24513$ or $24153$.

Case 6:  The permutation $\sigma$ contains the subsequence $21(b+1)b$.  In this final case, we need to separate both pairs individually.  Thus the basis elements obtained are $231564,251463,261453$.  However, $251463$ is not minimal as it contains $24153$.
\end{proof}

We have included the avoidance numbers for this basis in Column $2$ of Table~\ref{table2} in Section~\ref{gen_fun}, and the data in Table~\ref{table_1} in Section~\ref{bounds} gives the number of permutations of exact tier $t$.

\section{The maximum tier of a permutation of a given length}~\label{bounds}

Continuing the use of separated pairs, we may now show there is a finite basis for each class of $k$-pass sortable permutations by bounding the length of potential basis elements. Given a nonnegative integer $t$, let $P_t$ denote the set of all permutations of tier at most $t$ and let $B_t$ be the basis for this set. That is $\sigma \in P_t$ if and only if $\sigma \in \Av(B_t)$.

\begin{proposition}
Given any $\sigma \in B_t$, the length of $\sigma$ is at most $3(t + 1)$.
\end{proposition}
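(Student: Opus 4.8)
The plan is to combine the characterization of tier by separated pairs (Theorem~\ref{tier_num_pairs}) with the minimality of basis elements, using the latter both to fix the number of separated pairs and to force every entry into an ``essential'' role. First I would pin down that $\sigma$ has \emph{exactly} $t+1$ separated pairs. Since $\sigma \in B_t$ is a minimal permutation not lying in $P_t$, Theorem~\ref{tier_num_pairs} gives that $\sigma$ has at least $t+1$ separated pairs while every proper pattern of $\sigma$ has at most $t$. If $\sigma$ had $t+2$ or more separated pairs, I would delete the smaller entry of its smallest separated pair; as in the proof of Theorem~\ref{thm_t_1}, this removes exactly one separated pair, producing a proper pattern with at least $t+1$ separated pairs and hence tier exceeding $t$, contradicting minimality. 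Thus $\sigma$ has exactly $t+1$ separated pairs.

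Next I would record the only mechanisms by which deleting a single value $v$ can destroy a fixed separated pair $(i+1,i)$. After restandardization, the values $i$ and $i+1$ both survive and remain consecutive unless $v\in\{i,i+1\}$, and a separator still lies between them unless $v$ was the only one. Consequently, deleting $v$ destroys $(i+1,i)$ precisely when $v$ is the top $i+1$, the bottom $i$, or the unique separator of that pair. By minimality, deleting any entry $e$ places $\sigma$ into $P_t$, so the number of separated pairs must strictly drop; although a deletion may also create new separated pairs, the net count can fall only if at least one existing pair is destroyed. Therefore every entry of $\sigma$ is the top, the bottom, or the unique separator of some separated pair.

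Finally I would charge each entry to one of the $t+1$ pairs by the priority rule: charge $e$ to the pair it bottoms if possible, otherwise to the pair it tops, otherwise to a pair for which it is the unique separator; the previous paragraph guarantees each entry receives a charge. A fixed pair $(a_j+1,a_j)$ can then receive at most one bottom charge (only the value $a_j$), at most one top charge (only the value $a_j+1$, and only when that value is not itself a bottom), and at most one separator charge (a pair with a unique separator admits a single separating value). Summing these at-most-three charges over all $t+1$ pairs bounds the number of entries, yielding length at most $3(t+1)$. I expect the main obstacle to be the restandardization bookkeeping in the second step: one must verify that destruction of a \emph{specified} pair genuinely forces $v$ into one of the three roles rather than arising from an incidental relabelling, since this is exactly what legitimizes the three-per-pair charge.
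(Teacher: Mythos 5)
Your proof is correct and takes essentially the same route as the paper's: fix the number of separated pairs at exactly $t+1$, observe that an entry whose deletion lowers the tier must be the top, the bottom, or the unique separator of some pair, and count at most three such entries per pair. You have simply made explicit the charging and restandardization bookkeeping that the paper's two-sentence proof leaves implicit.
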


\begin{proof}
First note that if the tier of $\sigma$ is greater than $t +  1$ then $\sigma$ contains a permutation of tier exactly $t + 1$ which can replace $\sigma$ in the basis. Thus the tier of $\sigma$ is $t + 1$.
If the length of $\sigma \in B_t$ is greater than $3(t + 1)$ then some number occurring in $\sigma$ is not part of a separated pair, nor necessary to separate a pair. Hence there is a shorter permutation which is contained in $\sigma$ of tier $(t  + 1)$ which can replace $\sigma$ in the basis.
\end{proof}

From Theorem~\ref{tier_0}, we know $B_0$ has one length $3$ basis element.  And by Theorem \ref{thm_t_1}, we see the basis $B_1$ consists of eleven basis elements; eight of length $5$ and three of length $6$.   The question of the shortest possible permutation in a given basis is more subtle. Equivalently one can ask, what is the maximal possible tier of a permutation of length $n$?

\begin{notation} Let $\tau(n)$ represent the maximum tier of over all permutations of length $n$, and as before let $t(\sigma)$ represent the  tier of $\sigma$.
\end{notation}

\begin{example}  From Theorems \ref{tier_0} and \ref{thm_t_1}, we have that $\tau(1) = \tau(2) = 0,$ $\tau(3) = \tau(4) = 1$, and $\tau(5) = 2$.  As displayed in Table~\ref{table_1} in Section~\ref{gen_fun}, we have $\tau(6) = 3$, $\tau(7) = \tau(8) = 4$, $\tau(9) = 5$ and $\tau(10) = 6$.
\end{example}

We will prove an exact formula for $\tau(n)$ later in Theorem~\ref{thm_tau},  however we first include proofs of a few lower bounds on $\tau(n)$ to demonstrate some specific constructions.

\begin{lemma}~\label{pack}
For all positive integers $n$, we have $\tau(n) \ge \lfloor \frac{n - 1}{2} \rfloor$.
\end{lemma}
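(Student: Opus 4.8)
The plan is to construct, for every positive integer $n$, an explicit permutation of length $n$ whose tier is at least $\lfloor \frac{n-1}{2}\rfloor$, since $\tau(n)$ is by definition the maximum tier over all length-$n$ permutations. By Theorem~\ref{tier_num_pairs}, the tier of a permutation equals its number of separated pairs, so it suffices to exhibit a length-$n$ permutation containing at least $\lfloor\frac{n-1}{2}\rfloor$ separated pairs. Recall that a separated pair $(i+1,i)$ requires a subsequence $(i+1,k,i)$ with $k>i+1$, so each separated pair ``consumes'' a descent between consecutive values $i+1$ and $i$ that is witnessed by a larger separating entry lying strictly between them in position.

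First I would look for a construction that reuses separating entries efficiently. The key economy is that a single large entry positioned between $i+1$ and $i$ can serve as the separator $k$, and moreover one well-placed element can potentially separate several consecutive pairs simultaneously. The natural candidate is a permutation built so that the small values $1,2,\dots$ are interleaved with larger values in a zigzag (up-down) pattern. For instance, I would examine permutations of the shape where we alternate a large entry and a small entry so as to create the subsequences $(i+1)\,k\,i$ for as many consecutive $i$ as possible. A clean way to see the count is to group the $n$ positions into roughly $\frac{n-1}{2}$ blocks, each block contributing one fresh separated pair $(i+1,i)$ whose separator is drawn from the ``high'' half of the values; the floor and the ``$-1$'' account for the one leftover position and the fact that the largest value can act only as a separator and not as the top of its own pair.

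The cleanest route is probably to give the construction by an explicit formula (or a short recursive description) and then verify directly that the claimed pairs $(2,1),(3,2),\dots$ are indeed separated, i.e.\ that for each targeted consecutive pair there is a genuinely larger entry sitting between them. I expect the verification itself to be routine once the pattern is written down: for each $i$ in the intended range one simply points to the three positions realizing $(i+1,k,i)$ and checks $k>i+1$. I would also confirm that distinct targeted pairs $(i+1,i)$ use disjoint value-pairs, so that the separated pairs are genuinely distinct and the count adds up; by Proposition~\ref{containment} this is automatic once the subpattern is present, but stating it keeps the argument self-contained.

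The main obstacle is choosing the construction so that the separator entries can be supplied \emph{within} the same length budget of $n$: every separator is itself one of the $n$ values, so naively each separated pair would cost three positions and give only about $n/3$ pairs (matching the later $3(t+1)$ bound but not the sharper $\frac{n-1}{2}$ here). The crucial trick, and the part requiring care, is to arrange the permutation so that separators are shared or so that an entry serving as the large end of one pair does double duty as the separator of another, pushing the yield up to one new separated pair per two positions. I would therefore spend the most effort confirming that a single up-down (alternating) arrangement achieves exactly this sharing, and that the edge cases for $n$ even versus $n$ odd produce the floor $\lfloor\frac{n-1}{2}\rfloor$ rather than one less.
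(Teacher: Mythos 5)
Your overall strategy is the right one and is the same as the paper's: since the tier of a permutation equals its number of separated pairs, it suffices to exhibit one length-$n$ permutation containing $\lfloor\frac{n-1}{2}\rfloor$ separated pairs, and the whole content of the lemma is the choice of witness. You also correctly diagnose the only real difficulty, namely that a naive ``three positions per pair'' construction yields only about $n/3$ pairs, so separators must be shared or do double duty. However, as written the proposal stops short of a proof: you describe two candidate constructions (a single element separating several pairs at once, and an alternating high/low zigzag) but never commit to either, never write down an explicit permutation, and explicitly defer the verification. For an existence statement of this kind, exhibiting and checking the witness \emph{is} the proof, so that deferral is the genuine gap.

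For comparison, the paper's construction is the first of your two candidates, pushed to its extreme: a \emph{single} separator, the value $n$, placed so that it simultaneously separates the $\lfloor\frac{n-1}{2}\rfloor$ \emph{disjoint} pairs $(2,1),(4,3),(6,5),\dots$, as in $6\,4\,2\,7\,5\,3\,1$ for $n=7$ (the tops $2m,\dots,4,2$ precede $n$, the bottoms $2m-1,\dots,3,1$ follow it). This costs $2\lfloor\frac{n-1}{2}\rfloor$ positions for the pairs plus one for the separator, which fits in length $n$, and the verification is immediate. The zigzag you lean toward --- overlapping consecutive pairs $(2,1),(3,2),\dots$, each with its own separator drawn from the top half, e.g.\ $4\,5\,3\,6\,2\,7\,1$ --- also works and gives the same count; it is essentially the construction the paper uses later in Lemma~\ref{odd} to get the stronger bound $\tau(2k+1)=k+\tau(k)$. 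Either route closes the gap once you actually write the permutation down and point to the subsequences $(i+1,k,i)$.
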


\begin{proof}
One can use $n$ to separate as many distinct pairs as possible.   In particular, we create the separated pairs 
$(2,1),(4,3),(6,5),\ldots,(2\lfloor \frac{n - 1}{2} \rfloor, 2\lfloor \frac{n - 1}{2} \rfloor -1)$.
Such a permutation must have at least $\lfloor \frac{n - 1}{2}\rfloor$ separated pairs and the bound follows.
\end{proof}

\begin{example}  If $n = 7$, use $7$ to separate $(6, 5), (4, 3), (2, 1)$ as in $6427531$.
\end{example}

For odd lengths we can do better.

\begin{lemma} ~\label{odd}
If $n$ is odd with $n = 2k + 1$, then $\tau(n) = k  + \tau(k)$.
\end{lemma}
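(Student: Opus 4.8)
The plan is to prove the two inequalities $\tau(2k+1)\ge k+\tau(k)$ and $\tau(2k+1)\le k+\tau(k)$ separately, working throughout with separated pairs in place of tiers, which is legitimate by Theorem~\ref{tier_num_pairs}. Throughout I would identify a separated pair $(i+1,i)$ with the edge $\{i,i+1\}$ on the path $1 - 2 - \cdots - (2k+1)$, so that a permutation of length $2k+1$ realizes some subset of these $2k$ edges, and a pair is separated exactly when its larger entry precedes its smaller entry with a strictly larger value somewhere in between.

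For the lower bound I would give an explicit interleaving construction generalizing the example $4637251$. Place the $k+1$ smallest values $1,\dots,k+1$ in decreasing order $(k+1)\,k\cdots 21$ at the odd positions $1,3,\dots,2k+1$, and place the $k$ largest values $k+2,\dots,2k+1$ at the even positions $2,4,\dots,2k$, arranging this block so that, after standardization, it realizes a length-$k$ permutation achieving $\tau(k)$ separated pairs. Each consecutive low pair $(j+1,j)$ with $1\le j\le k$ then has a large value occupying the even position strictly between its two entries, so it is separated; this yields $k$ separated pairs on the edges $\{1,2\},\dots,\{k,k+1\}$. Independently, every separated pair of the top block survives in the full permutation, since its separator is a still-larger top value lying between its entries and the interspersed (smaller) low values cannot destroy it; this yields $\tau(k)$ separated pairs on edges contained in $\{k+2,\dots,2k+1\}$. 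These two families of edges are disjoint, so the construction exhibits at least $k+\tau(k)$ separated pairs.

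The harder direction is the upper bound, and this is where I expect the real work. Given any $\pi$ of length $2k+1$ with $s$ separated pairs, I would split the pairs according to the value of their smaller element. Those with smaller element $\ge k+2$ lie entirely inside the subsequence on the top $k$ values $\{k+2,\dots,2k+1\}$; since any separator exceeds the larger entry and is therefore itself a top value, these are precisely the separated pairs of a length-$k$ permutation and so number at most $\tau(k)$. Those with smaller element $\le k+1$ lie among the $k+1$ edges corresponding to $(2,1),(3,2),\dots,(k+2,k+1)$, which gives only the naive bound $s\le (k+1)+\tau(k)$, off by one.

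The crux is therefore to sharpen the low estimate from $k+1$ to $k$, i.e.\ to show the $k+1$ pairs $(j+1,j)$ with $1\le j\le k+1$ cannot all be separated simultaneously. Suppose they were. Then $p(j+1)<p(j)$ for every such $j$ (writing $p(\cdot)$ for position), forcing $p(k+2)<p(k+1)<\cdots<p(1)$, so $1,\dots,k+2$ occur as a decreasing subsequence and the open position-intervals $\bigl(p(j+1),p(j)\bigr)$ are pairwise disjoint and contain no value from $\{1,\dots,k+2\}$. Each pair $(j+1,j)$ needs a separator exceeding $j+1$ inside its interval, and by the preceding sentence such a separator must come from $\{k+3,\dots,2k+1\}$; disjointness forces these $k+1$ separators to be distinct, which is impossible since $\{k+3,\dots,2k+1\}$ has only $k-1$ elements. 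This pigeonhole contradiction caps the low pairs at $k$, so $s\le k+\tau(k)$, completing the argument; I would handle the degenerate small cases $k\le 1$ by direct inspection.
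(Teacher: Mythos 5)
Your proof is correct, and both directions check out: the lower-bound interleaving is exactly the paper's construction (your general recipe reproduces its example $4637251$), and your partition of the separated pairs of an arbitrary permutation into ``high'' pairs (smaller element $\ge k+2$, bounded by $\tau(k)$ via standardization of the top $k$ values) and ``low'' pairs (smaller element $\le k+1$, bounded by $k$) is sound. Where you genuinely diverge is the upper bound. The paper argues by local optimality of its construction: it asserts that any permutation with more separated pairs would have to separate $(k+2,k+1)$, and that creating this pair forces one to ``un-separate'' a smaller pair, for a net change of zero --- an exchange argument that is left rather informal. You instead prove a clean global statement: if all $k+1$ pairs $(j+1,j)$, $1\le j\le k+1$, were separated, then $1,\dots,k+2$ would form a decreasing subsequence, the $k+1$ open position-gaps between consecutive elements of it would be pairwise disjoint and free of values $\le k+2$, and each gap would need its own separator from the $(k-1)$-element set $\{k+3,\dots,2k+1\}$ --- a pigeonhole contradiction. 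This makes rigorous precisely the step the paper waves at (``there are not enough larger entries to separate $k+1$ consecutive separated pairs''), and it has the advantage of bounding an arbitrary permutation directly rather than perturbing the extremal one; the paper's version is shorter but leans on the reader to believe the exchange analysis. Your argument is the one I would keep.
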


\begin{proof}
To prove this lemma, we first give a construction for a particular permutation of high tier giving a lower bound for $\tau(n)$, then argue it is optimal.

We create the largest set of consecutive separated pairs we can have, $(k + 1, k), (k, k - 1), (k - 1, k - 2),\ldots,(2, 1)$ and use the numbers from $k + 2$ up to $n = 2k + 1$ to separate them. Since all of these numbers are larger than all of the pairs we are separating, they can be used interchangeably.  In particular, we arrange these separators in an optimal $\tau( \frac{n - 1}{2} ) = \tau(k)$ pattern. Thus the number of separated pairs is at least $k  + \tau(k)$ and hence $\tau(n) \ge  \frac{n - 1}{2}  + \tau( \frac{n - 1}{2} )$.


The construction above produces the maximal number of separated pairs among the first $k + 1$ elements and among the last $k$ entries.  As such if a permutation on $[n]$ has more separated pairs, then it must include the separated pair $(k+2, k+1)$.  However, in order to create a new separated pair $(k + 2, k + 1)$, we need to ``un-separate'' at least one other smaller separated pair as there are not enough larger entries to separate $k+1$ consecutive separated pairs.  At best, we have a net change in tier of zero from the $k+2$ smaller entries.  Further, this cannot increase the number separated pairs we have among the larger entries as they were already in an optimal configuration.

Thus the construction is produces permutations of maximal tier.  Hence $\tau(n) =  \frac{n - 1}{2}  + \tau( \frac{n - 1}{2} )$.
\end{proof}

To illustrate the above we consider a few examples shown below.

\begin{example}
If $n = 7$ we begin with the sequence $4321$ then place $5, 6,$ and $7$ between each pair to separate them. 
We then use the unique optimal $\tau(3)$ pattern (i.e. $231$) for these elements to yield $4637251$ which has optimal tier $4$.
\end{example}

\begin{example}
If $n = 9$, we begin with $54321$ and use $6, 7, 8, 9$ to separate the consecutive pairs in an optimal length $4$ pattern such as $2314$ to yield $574836291$ which has tier $4 + 1 = 5$ and $\tau(9) = 5$.
\end{example}

We also note that increasing the allowed length of a permutation by one increases the maximal tier by at most one.

\begin{lemma}~\label{at_most_1}
For any positive integer $n$ we have $\tau(n + 1) \le \tau(n) + 1$.
\end{lemma}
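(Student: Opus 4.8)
The plan is to restate the claim in terms of separated pairs using Theorem~\ref{tier_num_pairs}: since the tier of a permutation equals its number of separated pairs, proving $\tau(n+1)\le\tau(n)+1$ amounts to showing that every permutation of length $n+1$ has at most $\tau(n)+1$ separated pairs. I would therefore start with a permutation $\pi$ of length $n+1$ realizing the maximum tier $\tau(n+1)$, and delete a single, carefully chosen entry to produce a length-$n$ permutation $\sigma$ whose number of separated pairs drops by at most one. Since $\sigma$ is contained in $\pi$, Proposition~\ref{containment} already guarantees that deletion never \emph{increases} the count, so the only thing to control is how much it can decrease.

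The crucial observation is that the entry to delete should be the one of value $1$, not the largest value (the maximum is a poor choice precisely because, as in Lemma~\ref{pack}, a single large entry can serve as the separator for many pairs, so its removal could destroy many of them at once). By contrast, the value $1$ can never play the role of the separator $k$ in a subsequence $(i+1,k,i)$, since a separator must exceed the larger element $i+1\ge 2$; and as a member of a separated pair it can only occur as the smaller element of $(2,1)$. Hence the value $1$ participates in at most one separated pair, namely $(2,1)$. After deleting it and standardizing the remaining values $2,3,\dots,n+1$ (which simply amounts to subtracting $1$ from each), the relative order and the consecutive-integer structure of every other pair are preserved, so each separated pair $(i+1,i)$ with $i\ge 2$, together with its separator, survives intact in $\sigma$.

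Putting these together, the number of separated pairs of $\sigma$ equals that of $\pi$ if $(2,1)$ was not a separated pair, and one fewer otherwise; in either case $t(\sigma)\ge t(\pi)-1=\tau(n+1)-1$. Because $\tau(n)$ is the maximum tier over all length-$n$ permutations, $\tau(n)\ge t(\sigma)\ge \tau(n+1)-1$, which rearranges to the desired $\tau(n+1)\le\tau(n)+1$. The only genuinely delicate point — and the step I would present most carefully — is the choice of which entry to delete and the verification that removing the value $1$ cannot collaterally break a separated pair $(i+1,i)$ with $i\ge 2$; once that is pinned down, the bound follows immediately from maximality.
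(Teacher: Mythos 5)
Your proposal is correct and follows essentially the same route as the paper: the paper's proof also deletes the entry $1$ from a maximal-tier permutation of length $n+1$, reduces the remaining values by one, and observes that this removes at most one separated pair, giving $t(\rho)\ge t(\sigma)-1$ and hence the bound. Your additional justification that $1$ cannot act as a separator and can only belong to the pair $(2,1)$ is a correct elaboration of the step the paper leaves implicit.
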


\begin{proof}
Let $\sigma$ be in $S_{n + 1}$ with tier $\tau(n + 1)$. If we remove the number $1$ from $\sigma$ and reduce all of the remaining numbers by $1$ to create a permutation $\rho \in S_n$. By removing $1$ from $\sigma$ we have at most removed one separated pair, hence $t(\rho) \ge t(\sigma) - 1$ or $\tau(n + 1) = t(\sigma) \le t(\tau) + 1 \le \tau(n) + 1.$
\end{proof}

Also, since we can find a length $n + 1$ permutation containing a given length $n$ permutation, we can use the argument in Lemma~\ref{odd} to get a lower bound for the case when $n$ is even as well.

\begin{corollary}
If $n = 2k$ then $\tau(n) \ge k - 1 + \tau(k - 1)$.
\end{corollary}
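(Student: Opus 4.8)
The plan is to bootstrap from the odd-length formula of Lemma~\ref{odd} using the fact that the maximum tier $\tau$ is monotone non-decreasing in the length $n$. First I would establish this monotonicity directly. Given any $\rho \in S_n$, I can append the value $n+1$ to the end to obtain a permutation $\rho' \in S_{n+1}$ that contains $\rho$ as a pattern: deleting the final (largest) entry of $\rho'$ leaves the first $n$ entries, which are order-isomorphic to $\rho$. By Proposition~\ref{containment}, $\rho'$ has at least as many separated pairs as $\rho$, so $t(\rho') \ge t(\rho)$. Choosing $\rho$ to realize the maximum tier $\tau(n)$ then gives $\tau(n+1) \ge t(\rho') \ge t(\rho) = \tau(n)$, hence $\tau(n) \le \tau(n+1)$ for every $n$.

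Next I would apply Lemma~\ref{odd} to the odd integer $n - 1 = 2k - 1$. Writing $2k - 1 = 2(k-1) + 1$ and reading the lemma with $k-1$ playing the role of its parameter, I obtain $\tau(2k-1) = (k-1) + \tau(k-1)$.

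Finally, combining the two facts with $n = 2k$: monotonicity yields $\tau(2k) \ge \tau(2k-1)$, and the reindexed lemma supplies $\tau(2k-1) = (k-1) + \tau(k-1)$, so $\tau(2k) \ge (k-1) + \tau(k-1)$, which is exactly the claim.

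I do not expect a genuine obstacle here, since the statement is a corollary of two already-proved results. The only points requiring care are the reindexing of Lemma~\ref{odd}, making sure the parameter shift $k \mapsto k-1$ is applied to the odd length $2k-1$ rather than to $2k$, and a clean justification of monotonicity through the embedding-plus-containment argument. Appending the new maximum is the cleanest embedding to use, because it trivially exhibits $\rho$ as a pattern of $\rho'$ and thereby lets Proposition~\ref{containment} do all the work, avoiding any need to track how separated pairs might be created or destroyed by a more elaborate insertion.
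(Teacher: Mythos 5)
Your proof is correct and follows essentially the same route as the paper, which justifies the corollary with the one-line remark that a length $n$ permutation embeds in a length $n+1$ permutation, so the Lemma~\ref{odd} construction at length $2k-1$ gives the bound at length $2k$. You have simply made explicit the monotonicity step (via Proposition~\ref{containment} and Theorem~\ref{tier_num_pairs}) and the reindexing of the lemma, both of which check out.
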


However, while we still can have only $\frac{n }{2}-1=\lfloor \frac{n - 1}{2} \rfloor$ separated pairs from our smallest $\frac{n}{2}+1$ entries, we can duplicate the above construction and have $\tau(\frac{n}{2})$ separated pairs amongst our largest $\frac{n}{2}$ entries.  Hence we have the following lemma.

\begin{lemma}~\label{even}
If $n = 2k$ then $\tau(n) \ge k - 1 + \tau(k)$.
\end{lemma}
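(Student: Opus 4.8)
The plan is to prove this lower bound by exhibiting a single explicit permutation $\sigma \in S_{2k}$ that carries at least $k-1+\tau(k)$ separated pairs, and then to invoke Theorem~\ref{tier_num_pairs}, which equates the tier of $\sigma$ with its exact number of separated pairs. The construction is a near-copy of the one used in Lemma~\ref{odd}: the only difference is that with $n=2k$ entries (rather than $2k+1$) we can afford one fewer consecutive pair among the small values, while still reserving a full block of $k$ large values to host their own optimal tier-$\tau(k)$ pattern.

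Concretely, I would place the $k$ smallest values $k, k-1, \ldots, 2, 1$ in decreasing order on the even positions $2, 4, \ldots, 2k$, and place the $k$ largest values $\{k+1,\ldots,2k\}$ on the odd positions $1, 3, \ldots, 2k-1$, choosing their left-to-right order so that it is order-isomorphic to an optimal tier-$\tau(k)$ permutation of length $k$. Since there are $k-1$ gaps strictly between consecutive small values and one leftover odd (leading) slot, every small gap receives exactly one large separator and no large value is wasted. For $n=6$ (so $k=3$, using the optimal length-$3$ pattern $231$, realized as $5,6,4$ on $\{4,5,6\}$) this yields $\sigma = 536241$, whose separated pairs are $(3,2)$ and $(2,1)$ among the small values and $(5,4)$ among the large values, giving tier $3 = 2 + \tau(3)$.

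I would then verify the two claimed families of separated pairs. For the small values, the entries $i+1$ and $i$ (for $1 \le i \le k-1$) occupy consecutive even positions with a single large value wedged between them; because every large value exceeds every small value, that large value separates the pair, producing the $k-1$ separated pairs $(k,k-1),\ldots,(2,1)$. For the large values, observe that they appear on the odd positions in exactly the same relative order as the chosen optimal pattern; interleaving the decreasing small values cannot destroy any of the large separated pairs, since the large value that witnesses such a pair stays physically between its two endpoints (the intervening small entries are irrelevant, as a separated pair only requires \emph{some} larger entry in between). Hence the large block contributes its full $\tau(k)$ separated pairs. Finally, the two families are automatically disjoint: a small separated pair consists of consecutive integers both $\le k$, whereas a large separated pair consists of consecutive integers both $\ge k+1$, so nothing is counted twice. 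Summing gives at least $(k-1)+\tau(k)$ separated pairs in $\sigma$, and Theorem~\ref{tier_num_pairs} yields $t(\sigma) \ge k-1+\tau(k)$, whence $\tau(2k) \ge k-1+\tau(k)$.

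The one point demanding genuine care is the simultaneous survival of both families under a single interleaving: I must confirm that using the large values as separators for the small pairs does not conflict with the separations they are required to provide among themselves. This is precisely where the count of $k-1$ (rather than $k$) small pairs is essential, mirroring the tighter bookkeeping of the odd case in Lemma~\ref{odd}; the one spare large value occupying the leading slot is exactly what lets every small pair obtain a separator while the $k$ large values still realize an optimal tier-$\tau(k)$ configuration. Everything else is routine verification.
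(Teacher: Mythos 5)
Your proof is correct and follows essentially the same route as the paper: the paper's (very terse) justification is precisely to duplicate the construction of Lemma~\ref{odd} with only $k-1$ consecutive separated pairs among the smallest $k$ values, separated by the $k$ largest values arranged in an optimal tier-$\tau(k)$ pattern. Your write-up simply makes that interleaving explicit and verifies the disjointness of the two families of separated pairs, which the paper leaves implicit.
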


Combining Lemmas~\ref{odd}, \ref{at_most_1}, and \ref{even}, we get the following result.

\begin{theorem}\label{thm_tau}
The maximum tier of a permutation of length $n$ satisfies the recurrence 
\begin{equation}\label{tau_0}
\tau(n) = \left\lfloor \frac{n - 1}{2} \right\rfloor + \tau\left(\left\lfloor \frac{n}{2} \right\rfloor\right).
\end{equation}
Moreover
\begin{equation}\label{tau}
\tau(n) = \sum_{j \geq 1} \bigg{\lfloor} \frac{n - 2^{j - 1}}{2^j} \bigg{\rfloor} = n - 1 - \lfloor \log_2(n) \rfloor.
\end{equation}
\end{theorem}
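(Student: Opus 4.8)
The plan is to establish the recurrence \eqref{tau_0} first and then read off both closed forms by an easy induction. The odd case of \eqref{tau_0} is immediate: if $n=2k+1$ then $\lfloor(n-1)/2\rfloor=\lfloor n/2\rfloor=k$, so the recurrence reads $\tau(2k+1)=k+\tau(k)$, which is exactly Lemma~\ref{odd}. For even $n=2k$ we have $\lfloor(n-1)/2\rfloor=k-1$ and $\lfloor n/2\rfloor=k$, so the recurrence asserts $\tau(2k)=k-1+\tau(k)$; the lower bound $\tau(2k)\ge k-1+\tau(k)$ is Lemma~\ref{even}, and the real content of the theorem is the matching upper bound
\begin{equation*}
\tau(2k)\le k-1+\tau(k).
\end{equation*}

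Before proving that bound I would record where the obvious combination of the earlier lemmas falls short, since this pinpoints the obstacle. Lemma~\ref{at_most_1} with the odd case gives $\tau(2k)\le\tau(2k-1)+1=k+\tau(k-1)$, and the monotonicity $\tau(2k)\le\tau(2k+1)$ (immediate from Proposition~\ref{containment}) with the odd case gives $\tau(2k)\le\tau(2k+1)=k+\tau(k)$. Both of these equal $k-1+\tau(k)$ precisely when $\lfloor\log_2 k\rfloor=\lfloor\log_2(k-1)\rfloor$, i.e.\ when $k$ is not a power of $2$; but when $k$ is a power of $2$ (equivalently $n$ is a power of $2$) each is too large by exactly one. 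So the entire difficulty is concentrated at the powers of $2$, and it cannot be resolved by Lemmas~\ref{odd}, \ref{at_most_1}, and \ref{even} alone.

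To close this gap I would use the separator-counting idea already present in the proof of Lemma~\ref{odd}, isolated as a sublemma: if a permutation realizes the $m$ consecutive separated pairs $(2,1),(3,2),\dots,(m+1,m)$, then it contains at least $m$ entries larger than $m+1$. Indeed, each such pair forces its middle value before its small value, so $m+1,m,\dots,1$ occur as a decreasing subsequence; the separators of the $m$ pairs lie in the $m$ pairwise disjoint position-gaps of this subsequence, and the separator of the gap between $j+1$ and $j$ must exceed $j+1$ while not being one of $j+2,\dots,m+1$ (those appear earlier in the decreasing run, hence not inside the gap), so it must exceed $m+1$. Thus the $m$ separators are distinct entries, each larger than $m+1$. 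Applying this with $m=k$ to any $\sigma\in S_{2k}$, where only the $k-1$ values $k+2,\dots,2k$ exceed $k+1$, shows the $k$ pairs $(2,1),\dots,(k+1,k)$ cannot all be separated, so at most $k-1$ of them are. Every remaining separated pair has both entries in $\{k+1,\dots,2k\}$ (its separator is forced above $k+1$ as well), so it is a separated pair of the length-$k$ pattern induced by the top $k$ values, and by Theorem~\ref{tier_num_pairs} these number at most $\tau(k)$. Adding the two counts gives $\tau(2k)\le(k-1)+\tau(k)$ uniformly in $k$, completing \eqref{tau_0}.

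Finally I would deduce the closed forms by strong induction on the recurrence. Setting $f(n)=n-1-\lfloor\log_2 n\rfloor$, one checks $f(1)=0=\tau(1)$ and, using $\lfloor\log_2(2k)\rfloor=\lfloor\log_2(2k+1)\rfloor=1+\lfloor\log_2 k\rfloor$ for $k\ge 1$, that $f$ satisfies \eqref{tau_0}; hence $\tau=f$. The same base case and recurrence hold for $S(n)=\sum_{j\ge1}\lfloor(n-2^{j-1})/2^j\rfloor$: the substitution $j\mapsto j+1$ rewrites the tail $\sum_{j\ge2}$ as $S(\lfloor n/2\rfloor)$, the half-unit shift introduced when $n$ is odd never crossing an integer, while the $j=1$ term is $\lfloor(n-1)/2\rfloor$. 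Thus $\tau(n)=S(n)=n-1-\lfloor\log_2 n\rfloor$, which is \eqref{tau}. The main obstacle throughout is the even upper bound at powers of $2$ isolated in the second paragraph; once the separator count supplies it, the remainder is bookkeeping.
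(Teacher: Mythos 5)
Your argument is correct, and it reaches the theorem by a genuinely different route than the paper at the one point where real work is needed: the upper bound in the even case. The paper never proves $\tau(2k)\le k-1+\tau(k)$ uniformly; instead it first derives the closed form $n-1-\lfloor\log_2 n\rfloor$ as a lower bound by iterating the recurrence and manipulating the binary expansion of $n$, then proves the matching upper bound only at $n=2^k$ (by showing the removal of $1,\dots,2^{k-1}$ destroys at most $2^{k-1}-1$ separated pairs and descending by induction to a length-$2$ permutation of tier $1$), and finally propagates to general $n$ via Lemma~\ref{at_most_1}; the recurrence \eqref{tau_0} is only confirmed as an equality at the very end, as a corollary of the closed form. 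You instead isolate a clean sublemma --- realizing the $m$ consecutive separated pairs $(2,1),\dots,(m+1,m)$ forces $m$ distinct separators all exceeding $m+1$ --- and use it to bound the separated pairs of any $\sigma\in S_{2k}$ with smaller entry at most $k$ by $k-1$, while the remaining pairs live entirely in the pattern induced by the top $k$ values and are bounded by $\tau(k)$ via Theorem~\ref{tier_num_pairs}. This gives the two-sided recurrence directly for every $n$, after which both closed forms are pure induction; your observation that the naive combination of Lemmas~\ref{odd}, \ref{at_most_1}, and \ref{even} fails exactly at powers of $2$ correctly identifies why the paper's detour through $n=2^k$ is needed there. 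Your route is arguably cleaner and more self-contained; the paper's buys the explicit binary-expansion identity for the sum in \eqref{tau} along the way. One cosmetic point, inherited from the paper's own statement: the sum $\sum_{j\ge1}\lfloor(n-2^{j-1})/2^j\rfloor$ should be truncated at $j=\lfloor\log_2 n\rfloor$ (or restricted to its nonnegative terms), since the terms with $2^{j-1}>n$ are $-1$; your telescoping of the tail into $S(\lfloor n/2\rfloor)$ is fine once that truncation is made explicit.
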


\begin{proof}
To prove this, we first show that the right hand sides of (\ref{tau_0}) and (\ref{tau}) give lower bounds for $\tau(n)$. We then argue the bound is exact for $n$ a power of $2$ and combined with Lemma \ref{at_most_1} the equality follows.

The recursive bound for $\tau(n)$, 
\[ 
\tau(n) \ge \left\lfloor \frac{n - 1}{2} \right\rfloor + \tau\left(\left\lfloor \frac{n}{2} \right\rfloor\right)
\]
 simply combines Lemmas \ref{odd}, \ref{even}. 

The right hand side  of (\ref{tau}) as a lower bound follows from iterating that recursive formula. For example,
\[
\displaystyle{
\tau(n) \ge \left\lfloor \frac{n - 1}{2} \right\rfloor + \left\lfloor \frac{\lfloor \tfrac{n}{2}\rfloor - 1}{2} \right\rfloor + \tau\left(\left\lfloor \frac{\lfloor \tfrac{n}{2}\rfloor}{2} \right\rfloor\right).}
\]
Let $\displaystyle{n = \sum_{i = 0}^{k} c_i 2^i}$ be the binary expansion of $n$ where $c_k = 1$ and note that in general
\[
\left\lfloor \frac{\lfloor \tfrac{n}{2^i}\rfloor}{2} \right\rfloor = \left\lfloor \frac{c_i+ c_{i + 1}2 + c_{i + 2}2^2+\cdots+c_{k}2^{k-i}}{2} \right\rfloor = c_{i + 1} + c_{i + 2}2 + c_{i+3}2^2 +\cdots+c_{k}2^{k-i-1} = \left\lfloor \frac{n}{2^{i + 1}} \right\rfloor
\]
and we have
\[
\left\lfloor \frac{\lfloor \tfrac{n}{2^i}\rfloor - 1}{2} \right\rfloor = \left\lfloor \frac{n - 2^i}{2^{i + 1}} \right\rfloor
{\rm ~ since~} \left\lfloor \frac{n}{2^i} \right\rfloor - 1 = \left\lfloor \frac{n - 2^i}{2^i} \right\rfloor
\]
so the right hand side of (\ref{tau}) follows as a lower bound for $\tau(n)$.

To show  (\ref{tau}),  we evaluate the individual terms in the sum
\begin{align*}
\left\lfloor \frac{n - 2^{j - 1}}{2^j} \right\rfloor &= \left\lfloor \frac{c_0}{2^j} + \frac{c_1}{2^{j - 1}} + \cdots +\frac{c_{j - 1}}{2} + c_j + c_{j+1}2 + \cdots c_k2^{k - j} -\frac{1}{2} \right\rfloor \\
&= c_j + c_{j+1}2 +\cdots+c_k2^{k - j} + \left\lfloor \frac{c_0}{2^j} + \frac{c_1}{2^{j - 1}} + \cdots +\frac{c_{j - 1}}{2} -\frac{1}{2} \right\rfloor.
\end{align*}
Finally we note that
\begin{align*}
\left\lfloor \frac{c_0}{2^j} + \frac{c_1}{2^{j - 1}} + \cdots + \frac{c_{j - 1}}{2} -\frac{1}{2} \right\rfloor
 &= \left\{ \begin{array}{cc} 0 & c_{j - 1} = 1\\ -1 & c_{j - 1} = 0 \end{array}\right. \\
 &= c_{j - 1} - 1.
\end{align*}
Thus $\displaystyle{\bigg{\lfloor} \frac{n - 2^{j - 1}}{2^j} \bigg{\rfloor} =  \left[\sum_{i = j}^k c_i 2^{i - j}\right] + (c_{j - 1} - 1)}$.
Summing on $j$ yields
\begin{align*}
\tau(n) & \ge  \displaystyle{ \sum_{j = 1}^k \bigg{\lfloor} \frac{n - 2^{j - 1}}{2^j} \bigg{\rfloor} 
= \sum_{j = 1}^k \bigg{[} \bigg{(} \sum_{i=j}^k c_i2^{i-j} \bigg{)} + c_{j-1} - 1 \bigg{]} } 
= \displaystyle{ \sum_{j = 1}^k  \sum_{i=j}^k c_i2^{i-j}  + \sum_{j = 1}^k (c_{j-1} - 1)  } \\
&= \displaystyle{ \sum_{i = 1}^k  \sum_{j=1}^i c_i2^{i-j}  + \sum_{j = 0}^{k-1} c_{j} - k}  
= \displaystyle{ \sum_{i = 1}^k c_i \sum_{j=0}^{i-1} 2^{j}  + \sum_{j = 0}^{k-1} c_{j} - k }  
= \displaystyle{ \sum_{i = 1}^k c_i (2^i -1)  + \sum_{j = 0}^{k-1} c_{j} -  k }  \\
&= \displaystyle{ \sum_{j = 0}^k c_j (2^j -1)  + \sum_{j = 0}^{k} c_{j} -c_k - k }  
= \displaystyle{ \sum_{j = 0}^k c_j 2^j   -c_k - k  } 
= n   -1 - k   \\
&=n - 1 - \lfloor \log_2(n) \rfloor.
\end{align*}

To complete the argument, we show first if $n$ is a power of $2$, the right hand side of (\ref{tau}) is also an upper bound for $\tau(n)$.  We then combine this with the lower bound and Lemma \ref{at_most_1} and the equalities follow. 

First we note that $\tau(2) = 0$ by inspection. Now assume $n = 2^k$ for a positive integer $k$ and that 
$\tau(n) > (n-1) - \lfloor \log_2(n) \rfloor = (2^k - 1) - k$ and $\pi \in S_n$ has $t(\pi) = \tau(n)$. 

As seen in previous constructions, among the first $2^{k - 1}$ entries of $\pi$ there are at most $2^{k - 1} - 1$ elements of separated pairs. (If $1, 2, 3, ...2^{k - 1} - 1$ are such that each is the smaller entry in a separated pair and $2^{k-1}$ appears before $2^{k-1} -1$, then $2^{k - 1}$ must appear as either the first or second element of $\pi$ and cannot be the smaller entry of a separated pair.)  Hence the removal of the entries $1, 2, 3, ... 2^{k - 1}$ from $\pi$ removes at most $2^{k - 1} - 1$ separated pairs. 

By then reducing each of the remaining entries of $\pi$ by $2^{k - 1}$ (but retaining the order), we obtain a permutation of length $2^{k - 1}$ with tier greater than $2^k - 1 - k - (2^{k - 1} - 1) = 2^{k - 1} - 1 - (k - 1) = \frac{n}{2} - 1 - \left\lfloor \log_2\left(\frac{n}{2}\right)\right\rfloor$. Thus if $\tau(2^k) >  (2^k - 1) - k$, then $\tau(2^{k-1}) >  (2^{k-1} - 1) - (k-1)$, and so inductively there must be a permutation of length $2$ and tier $1$ which is a contradiction.

Now if $n$ is not a power of $2$, then let $k$ be the greatest integer such that $n > 2^k$.  Suppose $n = 2^k +m$.  Then
\begin{align*}
\tau(n) &\leq \tau(2^k) +m \quad & \text{by Lemma \ref{at_most_1}}\\
&= (2^k - 1 -k) +m \quad &  \text{from the result we just proved}\\
&= (2^k+m) - 1 -\lfloor \log_2(n) \rfloor\\
&= (n - 1) -\lfloor \log_2(n) \rfloor
\end{align*}

Therefore $\tau(n) =  n - 1 - \lfloor \log_2(n) \rfloor$ for all $n$.  Moreover, this equality also forces the first bound, $\tau(n) = \left\lfloor \frac{n - 1}{2} \right\rfloor + \tau\left(\left\lfloor \frac{n}{2} \right\rfloor\right)$ to be sharp which completes the proof.
\end{proof}

From Theorem \ref{thm_tau} we get a curious result for lengths of the form $n = 2^k - 1$.

\begin{corollary}
Let $n = 2^k - 1$ for some integer $k$, then every permutation of tier $\tau(n)$ is constructed with the method given in Lemma \ref{odd}. Moreover there is exactly one such permutation.
\end{corollary}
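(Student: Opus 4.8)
The plan is to induct on $k$, leveraging the recursive shape of the extremal construction in Lemma~\ref{odd}. Write $n = 2^k-1$ and $m = 2^{k-1}-1$, so that $n = 2m+1$ and, by Lemma~\ref{odd} together with Theorem~\ref{thm_tau}, $\tau(n) = m + \tau(m)$; crucially $m$ is again of the form $2^{k-1}-1$, so the inductive hypothesis will apply to the ``top part''. The base cases $k=1$ (the permutation $1$) and $k=2$ (the permutation $231$) are immediate. For the inductive step I would fix a permutation $\pi\in S_n$ with $t(\pi)=\tau(n)$ and split the value set into the bottom block $L=\{1,\dots,m+1\}$ and the top block $H=\{m+2,\dots,n\}$, noting $|L| = 2^{k-1}$ and $|H| = m$.

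I would then classify each separated pair $(j+1,j)$ by its smaller element $j$: the \emph{bottom} pairs have $j\le m$ (both entries in $L$), the single \emph{crossing} pair has $j=m+1$, and the \emph{top} pairs have $j\ge m+2$ (both entries in $H$). Two counting bounds drive the argument. First, a top pair needs a separator that is necessarily also in $H$, so the top pairs are exactly the separated pairs of the length-$m$ reduction of $\pi$ restricted to $H$; hence there are at most $\tau(m)$ of them. Second, the bottom pairs together with the crossing pair are precisely the separated pairs whose smaller element lies among the smallest $m+1 = 2^{k-1}$ values, and by the same argument as in the proof of Theorem~\ref{thm_tau} this is bounded by $m$. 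Adding the two bounds gives $t(\pi)\le m+\tau(m)=\tau(n)$, so maximality forces both bounds to be tight: $\pi$ restricted to $H$ is an optimal tier-$\tau(m)$ permutation of length $2^{k-1}-1$, and there are exactly $m$ separated pairs with smaller element in $L$.

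The crux, and the step I expect to be the main obstacle, is to rule out the competing scenario in which the crossing pair is separated at the cost of one bottom pair, i.e. to show the two tight bounds can be met simultaneously in only one way. Here I would invoke the inductive hypothesis: tightness of the first bound forces $\pi|_H$ to be the unique construction of Lemma~\ref{odd}, in which the smallest top value $m+2$ occupies the last position among the entries of $H$. Consequently no entry of $H$ lies to the right of $m+2$, so there is no value exceeding $m+2$ available to separate the pair $(m+2,m+1)$; thus the crossing pair cannot be separated. Tightness of the second bound then forces all $m$ bottom pairs $(2,1),(3,2),\dots,(m+1,m)$ to be separated.

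Finally I would read off the forced global structure. Since every bottom pair is separated, the values $m+1,m,\dots,1$ must appear in strictly decreasing order, and each of the $m$ gaps between consecutive bottom entries must contain a separator larger than the smaller entry flanking that gap; since all larger values of $L$ already lie to the left, each such separator must come from $H$. With exactly $m$ gaps and $|H|=m$, this places exactly one entry of $H$ in each gap and none outside, so $\pi$ is the decreasing run $m+1,m,\dots,1$ interleaved with the entries of $H$ in the order dictated by $\pi|_H$. Because $\pi|_H$ is uniquely determined by the inductive hypothesis, $\pi$ is uniquely determined and coincides with the permutation produced by the construction of Lemma~\ref{odd}, completing the induction.
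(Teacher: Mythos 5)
Your proof is correct, but it proceeds quite differently from the paper's. The paper fixes a maximal-tier $\sigma$ of length $n=2^k-1$ \emph{not} of the prescribed interleaved form, takes the smallest $j$ for which $(j+1,j)$ is not a separated pair (necessarily $j\le m$), and then builds a permutation of length $n+1$ with tier $\tau(n)+1$ by rearranging the small entries and inserting a new value; this contradicts the consequence $\tau(n+1)=\tau(n)$ of Theorem~\ref{thm_tau}. Uniqueness is then obtained by a separate inductive count on the middle block. You instead run a single induction on $k$: you partition the potential separated pairs by whether the smaller element lies in $L=\{1,\dots,m+1\}$ or in $H=\{m+2,\dots,n\}$, bound the two classes by $m$ and $\tau(m)$ respectively, and use tightness of both bounds plus the inductive description of $\pi|_H$ (whose smallest entry sits last, so the crossing pair $(m+2,m+1)$ cannot be separated) to force the full interleaved structure and its uniqueness simultaneously. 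What the paper's route buys is brevity, at the cost of a somewhat delicate insertion/rearrangement step whose tier bookkeeping must be checked; what your route buys is a self-contained structural argument in which existence of the extremal form and its uniqueness drop out of the same tightness analysis, and it reuses the block-counting already present in the proof of Theorem~\ref{thm_tau} rather than its numerical conclusion. One small slip: the separator in the gap between $j+1$ and $j$ must exceed the \emph{larger} flanking entry $j+1$, not the smaller one; this does not affect your conclusion, since every element of $H$ exceeds $m+1\ge j+1$ and you have already shown the gap can contain only elements of $H$.
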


\begin{proof} Given the formula in Theorem \ref{thm_tau}, if $n = 2^k -1$ then $\tau(n + 1) = \tau(n)$. Assume  that $\sigma$ is a permutation of tier $\tau(n)$ and note $\tfrac{n - 1}{2} = 2^{k - 1} - 1$ which we will call $m$ for convenience. Now suppose $\sigma$ is not of the form $(m + 1)a_2(m) a_4(m - 1) a_6 \ldots (2)a_{n - 1}(1)$ where $a_2a_4 \ldots a_{n - 1}$ has tier $\tau(2^{k - 1} - 1)$. Let $j$ is the first element that is not part of a separated pair $(j + 1, j)$. 

To avoid the form described above, we must have $j < m + 1$.  We can create a new permutation $\hat{\sigma}$ by first moving all entries less than $j + 1$ to the right (if necessary) so that there is only one separator for each separated pair, then increase all values in $\sigma$ larger than $j$ by one, and add a $j + 1$ in the first position. We have that $(j + 1, j)$ is now a separated pair in $\hat{\sigma}$ without affecting any other separated pairs, thus $t(\hat{\sigma}) = t(\sigma) + 1 = \tau(n) + 1$ contradicting the result in Theorem \ref{thm_tau}. 

Thus every maximal tier permutation of length $2^k - 1$ has the form above.  Thus the number of such permutations is the same as the number of length $2^{k - 1} - 1$ permutations $a_2a_4 \ldots a_{n - 1}$ having maximal tier.  Proceeding inductively, the number of such permutations is seen to be the same as the number of length one permutations of maximal tier and the result follows.
\end{proof}

\begin{example} For length $n = 2^4 - 1 = 15$ the maximal tier is $11$ and the unique permutation of this length and tier is $\pi = 8\; 12\; 7\; 14\; 6\; 11\; 5\; 15\; 4\; 10\; 3\; 13\; 2\; 9\; 1$.
\end{example}

We can also consider how to find the tier of a permutation obtained by combining two permutations via a specific kind of concatenation to get a new permutation.

\begin{definition}  An \emph{interval} of a permutation $\pi$ is a consecutive subsequence of $\pi$ that contains consecutive values.
\end{definition}

\begin{example}  The permutation $\sigma = 685712943$ contains maximal intervals $6857, 12, 9, 43$.
\end{example}

\begin{definition}
A permutation $\pi$ is said to be \emph{plus-decomposable} if $\pi$ is the concatenation of two non-empty intervals $\omega$ and $\tau'$ where the values of $\omega$ are less than those of $\tau'$.  Further, if we rescale the entries of $\tau'$ by subtracting the length of $\omega$ from each entry of $\tau'$ to get a permutation $\tau$, we denote $\pi = \omega \oplus \tau$.  If a permutation is not plus-decomposable, we say the permutation is \emph{plus-indecomposable}.
\end{definition}

\begin{example}  The permutation $\pi=43126758$ is plus-decomposable and can be written as $\pi = 4312 \oplus 231\oplus 1$.  The permutation $\sigma = 685712943$ is plus-indecomposable.
\end{example}

\begin{proposition} If a permutation $\pi$ is plus-decomposable, say $\pi = \sigma \oplus \tau$, then the tier of $\pi$ is the sum of the tiers of $\sigma$ and $\tau$, i.e. $t(\pi)= t(\sigma) + t(\tau)$.
\end{proposition}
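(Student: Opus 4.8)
The plan is to reduce everything to counting separated pairs via Theorem~\ref{tier_num_pairs}, which tells us that $t(\pi)$ equals the number of separated pairs in $\pi$. So it suffices to show that the number of separated pairs in $\pi = \sigma \oplus \tau$ is the sum of the numbers of separated pairs in $\sigma$ and in $\tau$. Write $m = |\sigma|$, so that in $\pi$ the left block occupies the first $m$ positions and carries the values $\{1, \ldots, m\}$ (forming a copy of $\sigma$), while the right block occupies the remaining positions and carries the values $\{m+1, \ldots, n\}$ (forming a shifted copy of $\tau$).

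First I would classify the separated pairs of $\pi$ by the values involved. A separated pair $(i+1, i)$ consists of two consecutive values. Because the left block holds exactly the bottom $m$ values and the right block the top $n-m$ values, any such consecutive pair is either entirely below the cut ($i+1 \le m$), entirely above it ($i \ge m+1$), or is the single boundary pair $(m+1, m)$.

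The key observation --- and the only place any care is needed --- is that no separator can cross the block boundary. For a pair entirely within the left block, both $i+1$ and $i$ sit in the contiguous prefix, so any position strictly between their positions also lies in the prefix; hence the separator $k$ lies in the left block, and since the left block is an exact copy of $\sigma$, the pair $(i+1,i)$ is separated in $\pi$ if and only if it is separated in $\sigma$. Symmetrically, for a pair entirely within the right block, both entries lie in the contiguous suffix, so the separator lies in the suffix as well, and such pairs correspond exactly to the separated pairs of $\tau$ under the value shift by $m$. Finally, the boundary pair $(m+1, m)$ can never be separated: $m$ is a left-block value and $m+1$ a right-block value, so every occurrence of $m$ precedes every occurrence of $m+1$ in $\pi$, and no subsequence of the form $(m+1, k, m)$ exists.

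Combining the three cases, the separated pairs of $\pi$ are in bijection with the disjoint union of the separated pairs of $\sigma$ and those of $\tau$, so their counts add. Applying Theorem~\ref{tier_num_pairs} three times then gives $t(\pi) = t(\sigma) + t(\tau)$. I expect the main (and essentially the only) subtlety to be the boundary analysis just described, namely verifying that neither a separated pair nor its separator can straddle the cut between the two blocks; the rest is a direct translation through the separated-pair characterization of tier.
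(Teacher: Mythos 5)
Your proof is correct, but it takes a genuinely different route from the paper's. The paper argues operationally about the sorting algorithm itself: since every entry of the lower block must be output before any entry of the upper block, the $(t(\sigma)+1)$st pass both finishes $\sigma$ and serves as the first pass for the shifted copy of $\tau$, which then needs $t(\tau)$ further passes, giving $t(\sigma)+t(\tau)+1$ passes in total. You instead reduce the whole statement to the static characterization of tier as the number of separated pairs (Theorem~\ref{tier_num_pairs}) and check that separated pairs cannot straddle the cut: pairs below the cut live entirely in the prefix (so the separator does too), pairs above it live entirely in the suffix, and the boundary pair $(m+1,m)$ occurs in increasing positional order and so is never separated. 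Your boundary analysis is complete and the counts add as claimed. What your approach buys is a purely combinatorial argument that avoids reasoning about the dynamics of the passes (in particular, it sidesteps the slightly delicate claim that one pass can be ``shared'' between the two blocks); it also adapts cleanly to the minus-decomposable case, where the same boundary pair $(|\tau|+1,|\tau|)$ is separated precisely when the last entry of $\sigma$ is not its minimum, recovering the case split in the paper's next proposition. What the paper's argument buys is independence from Theorem~\ref{tier_num_pairs} and a direct picture of how the machine actually processes a plus-decomposable input.
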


\begin{proof}  Consider the process of sorting $\pi$.  As every entry of the $\sigma$ portion of $\pi$ must be pushed to the output before any entry of $\tau'$, we cannot start sorting $\tau'$ until the last pass needed to sort $\sigma$ has commenced.  This final pass where part of $\sigma$ is still being sorted is the $(t(\sigma)+1)$st pass.

During the $(t(\sigma) +1)$st pass, every remaining entry of the $\sigma$ portion can be output before $\tau'$  is considered and thus this pass can also be used as the first pass for $\tau'$.  Then $\tau'$ requires $t(\tau)$ more passes to be sorted.  Hence $\pi$ is $[t(\sigma) + t(\tau) +1]-$pass sortable and thus $t(\pi) = t(\sigma) + t(\tau)$.
\end{proof}

\begin{definition}
A permutation $\pi$ is said to be \emph{minus-decomposable} if $\pi$ is the concatenation of two non-empty intervals $\omega'$ and $\tau$ where the values of $\omega'$ are greater than those of $\tau$.  As before, if we rescale the entries of $\omega'$ by subtracting the length of $\tau$ from each entry of $\omega'$ to get a permutation $\omega$, we denote $\pi = \omega \ominus \tau$.
\end{definition}

\begin{example}  The permutation $\pi=67584312$ is minus-decomposable and can be written as $\pi = 2314 \ominus 1\ominus 1 \ominus 12$.
\end{example}

\begin{proposition} If a permutation $\pi$ is minus-decomposable, say $\pi = \sigma \ominus \tau$, and $\sigma$ is $k$-pass sortable and $\tau$ is $m$-pass sortable, then $\pi$ is $(k+m)$-pass sortable, i.e. $t(\pi)= t(\sigma) + t(\tau) + 1$ if the last entry of $\sigma$ is not $1$, otherwise $\pi$ is $(k + m - 1)$ sortable and $t(\pi) = t(\sigma) + t(\tau)$.
\end{proposition}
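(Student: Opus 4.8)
The plan is to compute $t(\pi)$ directly through Theorem~\ref{tier_num_pairs}, which identifies the tier of a permutation with its number of separated pairs. Since $\sigma$ being $k$-pass sortable means $t(\sigma)=k-1$ and $\tau$ being $m$-pass sortable means $t(\tau)=m-1$, it suffices to show that the number of separated pairs of $\pi$ equals $t(\sigma)+t(\tau)+1$ when the last entry of $\sigma$ is not $1$, and $t(\sigma)+t(\tau)$ otherwise. Write $\ell$ for the length of $\tau$, so that in $\pi=\omega'\tau$ the block $\omega'$ occupies the first positions and carries the values $\{\ell+1,\dots,n\}$, while $\tau$ occupies the last positions and carries the values $\{1,\dots,\ell\}$.

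First I would classify each separated pair $(i+1,i)$ of $\pi$ according to where its two entries lie. Because every value of $\omega'$ exceeds every value of $\tau$, the pair $(i+1,i)$ cannot straddle the two blocks unless $i+1=\ell+1$ and $i=\ell$. I would then argue that the separated pairs lying entirely inside $\omega'$ are in bijection with the separated pairs of $\sigma$: rescaling $\omega'$ to $\sigma$ by subtracting $\ell$ is order-preserving and shifts all values by a constant, so it sends consecutive values to consecutive values and separators to separators, while the relative positions inside the block are untouched. A separator for such a pair must have value exceeding $i+1\ge\ell+2$, hence lies in $\omega'$, so no pair inside $\omega'$ can borrow a separator from $\tau$. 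Symmetrically, the separated pairs lying entirely inside $\tau$ are exactly the separated pairs of $\tau$, since any separator of value greater than $i+1$ would have to sit between two positions of $\tau$, all of which come after $\omega'$. These two counts contribute $t(\sigma)$ and $t(\tau)$ respectively.

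The decisive step is the lone possible straddling pair $(\ell+1,\ell)$, where $\ell+1$ is the smallest value of $\omega'$ and $\ell$ is the largest value of $\tau$. Since $\ell$ lies in $\tau$ and hence after all of $\omega'$, this pair is separated precisely when some value larger than $\ell+1$ occurs after $\ell+1$ but before $\ell$; as every other value of $\omega'$ exceeds $\ell+1$ and precedes $\tau$, this happens if and only if $\ell+1$ is not the final entry of $\omega'$. After rescaling, $\ell+1$ is the value $1$ of $\sigma$, so $\ell+1$ is the last entry of $\omega'$ exactly when the last entry of $\sigma$ is $1$. Thus the boundary pair contributes an extra $1$ to the tier when the last entry of $\sigma$ is not $1$, and contributes nothing when it is, giving the claimed dichotomy. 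The main obstacle I anticipate is the bookkeeping around this boundary pair—verifying both that it is the unique pair capable of crossing the cut and that its separation status is governed solely by the position of the minimum of $\omega'$—since the two within-block correspondences become routine once a pair's separator is shown to lie in the same block as the pair itself.
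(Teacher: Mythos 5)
Your proof is correct, and it takes a genuinely different route from the paper's. The paper argues operationally: it tracks the sorting process, observing that every entry of $\tau$ must be output before any entry of the (unrescaled) $\sigma$ block can be, and that the final pass used for $\tau$ can double as the first pass for the $\sigma$ block precisely when the smallest entry of that block sits at its end. You instead invoke Theorem~\ref{tier_num_pairs} and count separated pairs statically: pairs internal to each block are in bijection with the separated pairs of $\sigma$ and of $\tau$ respectively (since a separator must lie, both in value and in position, in the same block as its pair), and the unique pair that can cross the cut, $(\ell+1,\ell)$, is separated exactly when $\ell+1$ is not the last entry of the first block, i.e.\ when the last entry of $\sigma$ is not $1$. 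This mirrors how one would prove the plus-decomposable case by pair-counting and is arguably tighter than the paper's argument, whose pass-counting bookkeeping is informal (indeed the paper's proof and statement disagree by one on the labels ``$(k+m)$-pass'' versus ``$(k+m+1)$-pass'' in the two cases, an inconsistency your tier computation sidesteps entirely). What the paper's approach buys is a direct picture of how the machine behaves on a minus-decomposable input; what yours buys is a short, fully checkable derivation of the exact tier formula from an already-established theorem. One small caveat: you read ``$k$-pass sortable'' as $t(\sigma)=k-1$ exactly, whereas it literally means $t(\sigma)\le k-1$; since you prove the exact tier identity $t(\pi)=t(\sigma)+t(\tau)+[\text{last entry of }\sigma\ne 1]$, both readings of the conclusion follow, so nothing is lost.
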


\begin{proof}  Again, consider the process of sorting $\pi$.  Every entry of $\tau$  must be pushed to the output before any entry of $\sigma'$, we cannot start sorting $\sigma'$ until after the last pass needed to sort $\tau$ has been completed.  If the last entry of $\sigma'$ is the smallest entry of $\sigma'$ then the last pass to sort $\tau$ is also the first pass to sort $\sigma'$, hence $\pi$ is $(k + m)-$pass sortable and $t(\pi) = t(\sigma) + t(\tau)$. Otherwise another pass is required to begin sorting $\sigma'$ and  hence $\pi$ is $[k + m + 1]-$pass sortable and thus $t(\pi) = t(\sigma) + t(\tau) + 1$.
\end{proof}

Note that if a permutation is minus-indecomposable and length greater than one, then it cannot have its smallest value in the last position. This leads to the following corollary:

\begin{corollary}
Let $\pi$ be a permutation and assume $\pi = \sigma_1 \ominus \sigma_2 \ominus \cdots \ominus \sigma_p$ where each $\sigma_i$ is minus-indecomposable, and let $r$ denote the number of the $\sigma_i$ with $i < p$ of length one, then $t(\pi) = p - r - 1 + \displaystyle{\sum_{i = 1}^p t(\sigma_i)}$.
\end{corollary}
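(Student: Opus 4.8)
The plan is to prove the corollary by iterating the preceding Proposition on minus-decomposable permutations. Recall that for a single minus-decomposition $\pi = \sigma \ominus \tau$, that Proposition tells us $t(\pi) = t(\sigma) + t(\tau) + 1$ when the last entry of $\sigma$ is not its smallest, and $t(\pi) = t(\sigma) + t(\tau)$ otherwise (equivalently, the drop of one pass occurs exactly when $\sigma$ has its minimum in its last position). The key observation, furnished by the remark immediately preceding the corollary, is that a minus-indecomposable permutation of length greater than one can never have its smallest value in its last position. Hence the only way the ``savings'' case can arise is when the left factor in a decomposition step is a length-one permutation.

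First I would set up an induction on $p$, the number of minus-indecomposable components. The base case $p = 1$ is immediate since then $r = 0$ and the formula reads $t(\pi) = t(\sigma_1)$. For the inductive step, I would group the decomposition as $\pi = \sigma_1 \ominus \rho$ where $\rho = \sigma_2 \ominus \cdots \ominus \sigma_p$, and apply the Proposition to this single split. Here I must check which case of the Proposition applies: the left factor is $\sigma_1$, which is minus-indecomposable. If $\sigma_1$ has length greater than one, the remark guarantees its minimum is not last, so we are in the $+1$ case; if $\sigma_1$ has length exactly one, its single entry is trivially both smallest and last, so we are in the no-$+1$ case. This is precisely the dichotomy that the quantity $r$ is designed to track.

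Writing $r'$ for the number of length-one components among $\sigma_2, \dots, \sigma_{p-1}$ (i.e.\ those with index $i < p$ in the decomposition of $\rho$), the inductive hypothesis gives $t(\rho) = (p-1) - r' - 1 + \sum_{i=2}^{p} t(\sigma_i)$. Combining with the single-split computation, when $\sigma_1$ has length greater than one we get $t(\pi) = t(\sigma_1) + t(\rho) + 1$, and one checks that $r = r'$ in this case, so the exponent bookkeeping yields $p - r - 1 + \sum_{i=1}^{p} t(\sigma_i)$ as desired. When $\sigma_1$ has length one we get $t(\pi) = t(\sigma_1) + t(\rho)$ with $t(\sigma_1) = 0$, and now $r = r' + 1$; the absence of the $+1$ is exactly compensated by the increment in $r$, and the same final formula emerges. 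I would present both cases in a short aligned computation.

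The main obstacle, and the only genuinely delicate point, is the bookkeeping of $r$: one must be careful that $r$ counts only components $\sigma_i$ with $i < p$, so the final component $\sigma_p$ is never counted even if it has length one, and that splitting off $\sigma_1$ correctly shifts the index range. The substantive mathematical content has already been isolated in the remark (minus-indecomposability of length $>1$ forbids a trailing minimum) and in the preceding Proposition; the corollary is essentially an exercise in threading that dichotomy through the telescoping sum. I expect the proof to run only a few lines once the case split on $|\sigma_1|$ is made explicit.
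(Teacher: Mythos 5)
Your proposal is correct and follows exactly the route the paper intends: the corollary is stated there without a written-out proof, being presented as an immediate consequence of the preceding proposition on $\pi=\sigma\ominus\tau$ together with the remark that a minus-indecomposable permutation of length greater than one cannot end in its smallest value, and your induction on $p$ with the case split on $|\sigma_1|$ is precisely that iteration made explicit. The bookkeeping $r=r'+1$ when $|\sigma_1|=1$ and $r=r'$ otherwise checks out, so there is nothing to correct.
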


\section{Exact enumeration of tier t permutations of length n}~\label{gen_fun}

A simple program was written in SAGE \cite{sagemath} to compute the tier of all permutations up to length $10$. The data for the number of permutations of a given length and exact tier is given in Table \ref{table_1}. The numbers given in this triangular form appear in the OEIS A122890 and A158830 \cite{OEIS}. The sequences found in the OEIS were created by manipulating generating series for iterated functions, however one version (A122890) does have an equivalent  combinatorial interpretation which can be found in Parker's thesis~\cite{Parker_thesis}. We modify Parker's description slightly to align the data properly (in particular effecting a row-reversal which gives OEIS A158830).

\begin{table}
\[
\begin{tabular}{|r||c|c|c|c|c|c|c|}
\hline
       & t = 0 	& t = 1 	& t = 2		& t = 3 		& t = 4 		& t = 5		& t = 6 	 		 \\ \hline
n = 1  &	1	  &			&			&				&				&			 &				\\ \hline
n = 2  &	2 	  & 		&			&				&				&			&				 \\ \hline
n = 3  &	5	  &  1		&			&				&				&			 &				 \\ \hline
n = 4  &	14	  & 10		&			&				&				&			 &				  \\ \hline
n = 5  &	42	  & 70		&	8		&				&				&			 &				  \\ \hline
n = 6  & 	132	  & 424		&	160		& 4				&				&			 &				   \\ \hline
n = 7  &	429	  & 2382	&	1978	& 250			& 1				&			&				   \\ \hline
n = 8  &	1430  &	12804	& 19508		& 6276			& 302			&			&				  \\ \hline
n = 9  &	4862  &	66946	& 168608	& 106492		& 15674			&	298		&			   \\ \hline
n = 10 &	16796 &	343772	& 1337684	& 1445208		& 451948		&	33148	&	244			 \\ \hline
\end{tabular}
\]
\caption{Number of permutations of length $n$ and exact tier $t$}
\label{table_1}
\end{table}

\subsection{Parker's original description of OEIS A122890 and a bijection}

Assume $n, t$  are positive integers and let $W(n, t)$ be the number of sequences $a_1a_2\ldots a_n$ of length $n$,  such that each $1\le a_i \le i$ and there are exactly $t$ indices $i$ such that $a_i \le a_{i+1}$.

\begin{example}~\label{W42}  We have $W(4,2)=10$ as it counts the sequences:
$$1121,1131,1132,1211,1212,1213,1214,1221,1231,1232.$$
\end{example}

For convenience, we reindex the sequence and consider the complementary condition on the indices. That is, assume $n, t$ are non-negative integers with $n > 0$, and let $T(n,t)$ be the number of sequences $a=a_na_{n - 1}\ldots a_1$ of length $n$ where $1\le a_{n - i + 1} \le i$ for each $i$, and where there are exactly $t$ values of $i$ such that $a_{i + 1} > a_{i}$.  Note the reversal in indexing the entries of the sequence $a$.

We say the sequence has a \emph{descent} at $i$ if $a_{i + 1} > a_{i}$.  In this language $T(n, t)$ is the number of length $n$ sequences with entries bounded by the index (as described above) such that there are exactly $t$ descents. That is, $T(n,t) = W(n, n - 1 - t)$.  Note that the reindexing has no effect on the problem and counting descents instead of non-descents reverses the rows of the data in OEIS A122890.  We will refer to these reindexed sequences as \emph{Parker sequences}.

\begin{example} We have $T(4,1) = W(4,2)=10$ and counts the sequences given in Example~\ref{W42}.
\end{example}

\begin{example} $T(5,2)=8$ and counts the sequences:
$$11321,12121,12131,12132,12141, 12142,12143, 12321.$$
\end{example}

In the proof of Theorem~\ref{enumeration} we will create a bijection between permutations of length $n$ and tier $t$ and the Parker sequences counted by $T(n, t)$ where separated pairs in permutations correspond to descents in Parker sequences.  In particular, a permutation having $(i + 1, i)$ as a separated pair will correspond to a Parker sequence with a descent at index $i + 1$ where the indexing is from right to left as described above.  

\begin{example}
 In the proof of Theorem~\ref{enumeration}, the permutation $\pi = 53412678$ having separated pair $(3,2)$ will be shown to correspond to the Parker sequence $12344545$ which has a descent at index $3$ (from the right).
\end{example}

\begin{theorem}
\label{enumeration}
The number of permutations of length $n$ and tier $t$ is $T(n, t)$.
\end{theorem}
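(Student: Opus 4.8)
The plan is to construct an explicit bijection $\Phi$ from permutations of length $n$ to Parker sequences of length $n$ that sends the tier of a permutation to the number of descents of the corresponding sequence; the theorem then follows at once from Theorem~\ref{tier_num_pairs}, which identifies the tier with the number of separated pairs. For a permutation $\pi \in S_n$, write $p(v)$ for the position of the value $v$, and define $\Phi(\pi) = a$ by letting $a_v$ be one plus the number of entries larger than $v$ lying to the right of $v$ in $\pi$; that is, $a_v = 1 + |\{w > v : p(w) > p(v)\}|$. Reading these entries in the order $a_n a_{n-1} \cdots a_1$ produces a sequence in Parker's convention, where I identify the value $v$ in $\pi$ with the index $v$ of the sequence.

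First I would check that $a$ is a valid Parker sequence. Since there are exactly $n - v + 1$ values that are at least $v$, and $v$ itself always contributes the leading $1$, we have $1 \le a_v \le n - v + 1$; setting $v = n - i + 1$ this is precisely the requirement $1 \le a_{n-i+1} \le i$. As a sanity check on conventions I would run the example $\pi = 53412678$, for which this rule yields $a_1 \cdots a_8$ equal (reading right to left as $a_8 a_7 \cdots a_1$) to $12344545$, matching the sequence named in the statement.

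Next I would show $\Phi$ is a bijection by exhibiting its inverse as an insertion procedure. Given any sequence $a$ with $1 \le a_v \le n - v + 1$, reconstruct $\pi$ by inserting the values $n, n-1, \dots, 1$ in decreasing order: having already placed in a row the $n - v$ values exceeding $v$, insert $v$ into the unique one of the $n - v + 1$ gaps that leaves exactly $a_v - 1$ of those larger values to its right. The bound $0 \le a_v - 1 \le n - v$ guarantees such a gap exists and is unique, so the procedure is well defined and inverts $\Phi$; hence $\Phi$ is a bijection.

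Finally, and this is the heart of the argument, I would show that $a$ has a descent at index $i$ (that is, $a_{i+1} > a_i$) exactly when $(i+1, i)$ is a separated pair of $\pi$. The key computation is to expand $a_{i+1} - a_i$ by splitting the values exceeding $i$ into $i+1$ and the values exceeding $i+1$, then to distinguish the two cases according to whether $p(i+1) > p(i)$ or $p(i+1) < p(i)$. When $i+1$ lies to the right of $i$ one finds $a_{i+1} < a_i$, consistent with $(i+1,i)$ being unable to form such a pattern; when $i+1$ lies to the left of $i$, the difference reduces to the number of values $w > i+1$ with $p(i+1) < p(w) < p(i)$, so that $a_{i+1} > a_i$ holds precisely when some such $w$ separates the pair $(i+1, i)$. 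Summing over $i$, the number of descents of $\Phi(\pi)$ equals the number of separated pairs of $\pi$, which by Theorem~\ref{tier_num_pairs} is $t(\pi)$. Thus $\Phi$ restricts to a bijection between permutations of tier $t$ and Parker sequences with $t$ descents, giving the count $T(n,t)$. I expect this last case analysis, together with keeping the value-indexing of $a$ aligned with the right-to-left indexing of Parker's sequences, to be the only delicate point; everything else is bookkeeping.
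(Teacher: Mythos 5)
Your proposal is correct and is essentially the paper's own argument: your map $\Phi$, sending $v$ to one plus the number of larger entries to its right, is exactly the inverse of the paper's insertion bijection $f$ (which records the relative position of $i$ from the right among the entries $\ge i$), and your case analysis of $a_{i+1}-a_i$ is a slightly more computational phrasing of the same descent--separated-pair correspondence. No substantive differences.
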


\begin{proof}
Let $P = \{ a_na_{n - 1}\ldots a_1 ~ | ~1\le a_{n - i + 1}\le i, \forall i, n\}$, that is $P$ is the set of all Parker sequences, and let $S$ be the set of all permutations. Let $f: P\rightarrow S$ be defined as $f(a_{n}a_{n - 1}\ldots a_1) = \pi_n\pi_{n - 1}\ldots \pi_1$ where the element $1$ is placed in $\pi_{a_1}$.  Then for each $i$ beginning with $2$ and proceeding in order to $n$, place the element $i$ in the $a_{i}$th position  of the remaining positions in $\pi$, counting from the right.

The map is a bijection as the process is invertible. That is, given a permutation $\pi = \pi_n\pi_{n - 1}\ldots \pi_1$, create the sequence $f^{-1}(\pi)=a_na_{n - 1}\ldots a_1$ by letting $a_i$ be the relative position (from right to left) of the element $i$ among the elements greater or equal to $i$ in the permutation $\pi$. The bounds on the entries must obey Parker's restriction as there are $n - i + 1$ numbers greater or equal to $i$ in $\pi$.

We now prove that there is a descent at $i$ for the sequence $a$ if and only if $(i + 1, i)$ is a separated pair in $f(a)$. Assume there is a descent at index $i+1$, that is $a_{i + 1} > a_{i}$ since $a_{i+1}$ appears before $a_i$ in $a$.  This would imply that $i  + 1$ is placed to the left of $i$ in the permutation $f(a)$.  Since the inequality is strict there must also be at least one unoccupied position between these two elements after we place $i + 1$ in the permutation. The only elements remaining are larger than $i + 1$.  Hence at least one of these larger elements must separate $i + 1$ and $i$ in $f(a)$.

Conversely, assume there is a separated pair in $\pi$, say $(i + 1, i)$.  Consider the associated sequence $a=f^{-1}(\pi)$. Then $a_{i + 1} \ge a_i$ since every element to the right of $i$ that is larger than $i$ is also to the right of $i+1$ and larger than $i+1$.   Further, since there must be a larger element separating $i + 1$ and $i$ in $\pi$, we have $a_{i + 1} >a_{i}$.  Hence there is a descent in $a$ at index $i+1$.

Since the number of descents a Parker sequence has is the same as the number of separated pairs the associated permutation has, we have the number of permutations of length $n$ and tier $t$ is $T(n, t)$.
\end{proof}

\begin{example}  As an example, we compute $f(12133)$ for the bijection given in the proof of Theorem~\ref{enumeration}.  
\begin{enumerate}
\item Begin with the last number in the sequence $3$ and place the $1$ into the third position from the right in the permutation so we have $**1**$. 
\item Then we consider the next element in the sequence from the right, since it is also $3$, we place the element $2$ in the third remaining position from the right and we have $*21**$. 
\item The next element in the sequence is $1$ hence we place the element $3$ in the first remaining position from the right, i.e. $*21*3$. 
\item Then we place the element $4$ in the second remaining position from the right as the next entry is $2$ to get $421*3$.  
\item Finally $5$ must be placed in the only remaining position (which is the first from the right) to get $42153$.
\end{enumerate}
\end{example}

\begin{corollary}
\label{enumeration_class}
The number of permutations of length $n$ sortable by a stack with (at most) $k$ passes, or $Av(B_{k - 1})$,  is $\displaystyle{\sum_{j=0}^{k - 1}T(n,j)}$.
\end{corollary}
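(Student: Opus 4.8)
The plan is to reduce the statement directly to Theorem~\ref{enumeration} by unwinding two pieces of terminology: what ``$k$-pass sortable'' means in terms of tier, and what the class $\Av(B_{k-1})$ consists of. First I would recall the definition of tier as one less than the minimal number of passes needed to sort a permutation. This immediately gives the key translation: a permutation is sortable in (at most) $k$ passes if and only if its tier is at most $k-1$. In the notation established in Section~\ref{bounds}, the set of such permutations is exactly $P_{k-1}$, and by the corollary following Proposition~\ref{containment} this set is a permutation class with basis $B_{k-1}$, i.e.\ $P_{k-1} = \Av(B_{k-1})$.

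Next I would partition $P_{k-1}$ according to exact tier. Since tier is a well-defined nonnegative integer for every permutation, the permutations of length $n$ with tier at most $k-1$ decompose as a disjoint union over $j = 0, 1, \ldots, k-1$ of the permutations of length $n$ with tier exactly $j$. Consequently the count of length-$n$ members of $\Av(B_{k-1})$ is the sum over $j$ of the number of length-$n$ permutations of tier exactly $j$.

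Finally I would invoke Theorem~\ref{enumeration}, which asserts that the number of permutations of length $n$ and tier $j$ equals $T(n,j)$. Substituting this into the sum from the previous paragraph yields
\[
\#\{\pi \in \Av(B_{k-1}) : |\pi| = n\} = \sum_{j=0}^{k-1} T(n,j),
\]
as claimed. There is no substantive obstacle here; the entire content is bookkeeping, and the only point requiring care is the off-by-one matching between passes and tier (so that $k$ passes corresponds to tier up to $k-1$, giving the summation range $j = 0$ to $k-1$ rather than $j = 0$ to $k$). One could optionally illustrate the result by reading off a column-sum from Table~\ref{table_1}, but this is not needed for the proof.
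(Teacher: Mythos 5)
Your proposal is correct and matches the paper's (implicit) reasoning: the corollary is stated without proof precisely because it follows immediately from Theorem~\ref{enumeration} by noting that ``sortable in at most $k$ passes'' means ``tier at most $k-1$'' and summing over exact tiers $j = 0, \ldots, k-1$. Your handling of the pass/tier off-by-one and the identification $P_{k-1} = \Av(B_{k-1})$ is exactly the bookkeeping the authors leave to the reader.
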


\subsection{An explicit formula for the number of tier t permutations of length n}

The sequences considered in the OEIS were originally constructed by manipulating generating functions. For the sake of completeness we include the constructions here. We also add a new construction of the generating function that gives an explicit formula for $T(n,t)$.

{\bf OEIS 158830 Construction}

Let $\hat{C}(x) = x C(x)$ where $C(x)$ is the generating function for the Catalan numbers or equivalently the avoidance numbers of any permutation of length $3$. Let the numbers in the $n$th row be the coefficients of the $n$th iterate of $\hat{C}(x)$. The multiply the generating function represented by the $t$th column by $(1 - x)^t$. The resulting entry in row $n$ and column $t$ is $T(n, t)$.

{\bf OEIS 122890 Construction, Parker \cite{Parker_thesis}}

Let $a_1(x) = x$, $a_2(x) = x + x^2$ and for all $n \ge 2$, let $a_n(x)$ be the $n - 1$ iterate of $x + x^2$. Write the coefficients of $a_n(x)$ as the entries in row $n$ and multiply the generating function represented by column $j$ by $(1 - x)^j.$ $T(n, t)$ is the entry in position $(n, n - t)$.

{\bf Alternate Construction}

To develop a recurrence for the number of permutations of length $n$ and tier $t$, we introduce a set of functions $f_k$ from permutations of length $n$ to length $n + 1$. In particular, given a permutation $\alpha$ of length $n$, let $f_k(\alpha)$ be the permutation obtained by increasing every number in $\alpha$ by one, then inserting a one into the $k$th position. For example $f_2(3412) = 41523$ and $f_3(3412) = 45123$. 

Assume $\beta = f_k (\sigma)$ for some permutation $\sigma$  of length $n$ and some integer $0 \le k \le n$. We note $\beta$ has the same number of separated pairs $(i + 1, i)$ with $i\ge 2$ as $\sigma$. Thus the $t(\beta)$ is either $t(\sigma)$ or $t(\sigma) + 1$ depending  on whether $(2, 1)$ is a separated pair in $\beta$. We note that $(2, 1)$ is a separated pair in $\beta$ if and only if $k$ is at least two larger than the position of the $1$ in $\sigma$.  That is, if the $1$ in $\sigma$ occurs in the $j$th position and $k \ge j + 2$, we will have $(2,1)$ as a separated pair in $\beta = f_k(\sigma)$.

Let $P(n, t, k)$ be the number of permutations of length $n$, tier $t$ (where $n > 0, t \ge 0$), and have the $1$ in the $k$th position (thus $1\le k \le n$). Clearly we have $T(n, t) = \displaystyle{\sum_{k = 1}^n P(n, t, k)}$.

\begin{example}
For example for length $n = 3$ we have $P(3, 0, 1) = 2$ (for the permutations $123, 132$), $P(3, 0, 2) = 2$ (for $213, 312$), $P(3, 0, 3) = 1$ (for $321$), $P(3, 1, 1) = 1$ (for $231$), and otherwise $P(3, t, k) = 0$.
\end{example}

\begin{theorem} \label{recurrence}
For all integers $n > 0$, $t \ge 0$, $1\le k \le n  + 1$ we have
\[
P(n + 1, t, k) = \sum_{j \ge k - 1} P(n, t, j) + \sum_{ j \le k - 2} P(n, t - 1, j).
\]
\end{theorem}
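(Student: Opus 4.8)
The plan is to count permutations of length $n+1$ with tier $t$ and the $1$ in position $k$ by classifying them according to which permutation of length $n$ they come from under the maps $f_j$ introduced just before the statement. Every permutation $\beta$ of length $n+1$ is $f_j(\sigma)$ for a unique permutation $\sigma$ of length $n$ and a unique insertion position $j$, obtained by deleting the $1$ from $\beta$ and decreasing all remaining entries by one; here $j$ is exactly the position that $1$ occupied, so in our situation $j = k$. The key observation already recorded in the text is that $f_j$ preserves all separated pairs $(i+1,i)$ with $i \ge 2$, so the only question is whether the newly inserted $1$ creates the separated pair $(2,1)$ in $\beta$.

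First I would fix the target position $k$ of the $1$ in $\beta = f_k(\sigma)$ and ask when $t(\beta) = t$. By the cited fact, $(2,1)$ is a separated pair in $\beta$ if and only if $k \ge p + 2$, where $p$ is the position of the $1$ in $\sigma$. I would split into two cases. If $(2,1)$ is \emph{not} a separated pair in $\beta$, then $t(\beta) = t(\sigma)$, so I need $t(\sigma) = t$; and the non-separation condition $k \le p + 1$, i.e. $p \ge k-1$, means the $1$ in $\sigma$ sits in some position $j \ge k-1$. These contribute $\sum_{j \ge k-1} P(n,t,j)$. If $(2,1)$ \emph{is} a separated pair in $\beta$, then $t(\beta) = t(\sigma) + 1$, so I need $t(\sigma) = t-1$; and the separation condition $k \ge p+2$, i.e. $p \le k-2$, means the $1$ in $\sigma$ sits in some position $j \le k-2$. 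These contribute $\sum_{j \le k-2} P(n,t-1,j)$.

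Summing the two disjoint cases gives exactly the claimed formula. The only point requiring care is confirming the threshold in the separation condition, namely that $(2,1)$ becomes a separated pair precisely when $k \ge p+2$: after increasing all entries of $\sigma$ by one and inserting the new $1$ at position $k$, the image of the old $1$ (now the value $2$) sits to the left of the new $1$ exactly when $k$ exceeds its position, and there is room for a separating (necessarily larger) entry strictly between them exactly when $k \ge p+2$, since $k = p+1$ places the $1$ immediately after the $2$ with no gap. I would also verify the boundary behaviour, checking that when $t = 0$ the second sum is empty (as $P(n,-1,j)=0$) and that the index ranges $j \ge k-1$ and $j \le k-2$ partition $\{1,\dots,n\}$ without overlap, so no permutation is double-counted and none is omitted. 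This bijective bookkeeping, rather than any computation, is the crux, and once the threshold $k \ge p+2$ is pinned down the result follows immediately.
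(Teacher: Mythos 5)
Your proposal is correct and follows essentially the same route as the paper: classify each length-$(n+1)$ permutation with $1$ in position $k$ by its unique preimage $\sigma = f_k^{-1}(\beta)$, and split according to whether $(2,1)$ becomes a separated pair, using the threshold $k \ge p+2$ on the position $p$ of the $1$ in $\sigma$. Your write-up is in fact more careful than the paper's about verifying the threshold and the disjointness of the two cases, but the underlying argument is identical.
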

\begin{proof}
First note that every permutation $\beta$ of length $n + 1$ with a $1$ in the $k$-th position arises exactly once from applying one of the $f_k$ operators to a permutation $\sigma$ of length $n$.  Consider $\beta = f_k(\sigma)$. If we apply $f_k$ to a permutation then the tier is either fixed or increases by one. The tier is fixed exactly when $(2, 1)$ is not a new separated pair in the permutation of length $n + 1$.  That is, the tier increases when the $1$ of $\beta$ is appears at least two slots to the right of the $1$ of $\sigma$ so that there is larger element to separate the pair $(2,1)$ of $\beta$.
\end{proof}

\begin{example}
Consider the permutation $24153$, since the $1$ occurs in the third position, $$t(f_k(24153)) = \left\{ \begin{array}{cc} t(24153) = 2 & {\rm if~ }k \le 4 \\ t(24153) + 1 = 3 & {\rm if~} k\ge 5 \end{array}\right.$$
\end{example}

\begin{table}
\[
\begin{tabular}{|r||c|c|c|c|c|c|c|}
\hline
       & t = 0 	& t $\le$ 1 	& t $\le$ 2		& t $\le$ 3 		& t $\le$ 4 		& t $\le$ 5		 & t $\le$ 6 			\\ \hline
n = 1  &	1	  &	1		&	1		&	1			&	1			&	1		 &	1		 \\ \hline
n = 2  &	2 	  & 2		&	2		&	2			&	2			&	2		&	 2		\\ \hline
n = 3  &	5	  &  6		&	6		&	6			&	6			&	6		 &	6		 \\ \hline
n = 4  &	14	  & 24		&	24		&	24			&	24			&	24		 &	24			  \\ \hline
n = 5  &	42	  & 112		&	120		&	120			&	120			&	120		 &	120			  \\ \hline
n = 6  & 	132	  & 556		&	716		& 720			&	720			&	720		&	 720		  \\ \hline
n = 7  &	429	  & 2811	&	4789	& 5039			& 5040			&	5040	&	5040	   \\ \hline
n = 8  &	1430  &	14234	& 33742		& 40018			& 40320			&	40320	& 40320		  \\ \hline
n = 9  &	4862  &	71808	& 240416	& 346908		& 362582		&	362880	&	362880		 \\ \hline
n = 10 &	16796 &	360568	& 1698252	& 3143460		& 3595408		&	3628556	& 3628800			 \\ \hline
\end{tabular}
\]
\caption{Number of permutations of length $n$ and tier at most $t$}
\label{table2}
\end{table}

The data in Table \ref{table2} gives the number of elements in each permutation class at a given length and up to a given tier. We also note that we were able to compute the number of basis elements in $B_3$. There are $4$ of length 6, $116$ of length $7$, $67$ of length $8$ and $12$ of length $9$ (note the maximal length of a basis element would be $12$).

Now, we are ready to find an explicit formula for the generating function $T_t(x)=\sum_{n\geq0}T(n,t)x^n$. In order to do that, we define
$P_{n,t}(v)=\sum_{k=1}^nP(n,t,k)v^{k-1}$. By multiplying the
recurrence relation in the statement of Theorem \ref{recurrence} by $v^{k-1}$,
we have
$$\sum_{k=1}^{n+1}P(n+1,t,k)v^{k-1}=\sum_{k=1}^{n+1}v^{k-1}\sum_{j\geq
k-1}P(n,t,j)+\sum_{k=1}^{n+1}v^{k-1}\sum_{j\leq k-2}P(n,t-1,j),$$
which, by exchanging the order of the sums, implies
$$\sum_{k=1}^{n+1}P(n+1,t,k)v^{k-1}=P_{n,t}(1)+\sum_{k=1}^n\frac{v(1-v^k)}{1-v}P(n,t,k)
+\sum_{k=1}^n\frac{v^{k+1}-v^{n+1}}{1-v}P(n,t-1,k).$$ Thus, by
definitions of $P_{n,t}(v)$, we obtain
\begin{align}
P_{n+1,t}(v)=P_{n,t}(1)+\frac{v}{1-v}(P_{n,t}(1)-vP_{n,t}(v))+\frac{v^2}{1-v}(P_{n,t-1}(v)-v^{n-1}P_{n,t-1}(1)).\label{eqtt1}
\end{align}
Define $P_n(w,v)=\sum_{t\geq0}P_{n,t}(v)w^t$. Then, by multiplying
\eqref{eqtt1} by $w^t$ and summing over $t\geq1$, we obtain
\begin{align}
P_{n+1}(w,v)=P_{n}(w,1)+\frac{v}{1-v}(P_{n}(w,1)-vP_{n}(w,v))+\frac{v^2w}{1-v}(P_{n}(w,v)-v^{n-1}P_{n}(w,1))\label{eqtt2}
\end{align}
with $P_1(v,w)=1$. Now, we define
$P(z;w,v)=\sum_{n\geq1}P_n(w,v)z^n$ to be the generating function
for $P_n(w,v)$. By multiplying \eqref{eqtt2} by $z^n$ and summing
over $n\geq1$, we have
$$P(z;w,v)-z=zP(z;w,1)+\frac{vz}{1-v}(P(z;w,1)-vP(z;w,v))+\frac{vwz}{1-v}(vP(z;w,v)-P(vz;w,1)),$$
which is equivalent to
$$\left(1+\frac{v^2z(1-w)}{1-v}\right)P(z;w,v)=z+\frac{z}{1-v}P(z;w,1)-\frac{vwz}{1-v}P(vz;w,1).$$
To solve this functional equation, we apply the kernel method and
take $$v=C(z(1-w))=\frac{1-\sqrt{1-4z(1-w)}}{2z(1-w)},$$ which
cancels $P(z;w,v)$, where $C(z)=\frac{1-\sqrt{1-4z}}{2z}$ is the
generating function for the Catalan numbers
$\frac{1}{n+1}\binom{2n}{n}$. This gives
\begin{align}
P(z;w,1)=C(z(1-w))-1+wC(z(1-w))P(zC(z(1-w));w,1).\label{eqtt3}
\end{align}

Define $\rho_0(z) = C(z(1 - w))$ and $\rho_j(z)=C\left(z(1-w)\prod_{i=0}^{j-1}\rho_i(z)\right)$
for all $j\geq 1$. Then, by assuming $0<|w|<1$, $|z|<1$ and
iterating \eqref{eqtt3}, we have
\begin{align*}
P(z;w,1)
&=\rho_0(z)-1+w\rho_0(z)P(z\rho_0(z);w,1)\\
&=\rho_0(z)-1+w\rho_0(z)(\rho_0(z\rho_0(z))-1)+w^2\rho_0(z)\rho_0(z\rho_0(z))P(z\rho_0(z)\rho_0(z\rho_0(z));w,1)\\
&=\rho_0(z)-1+w\rho_0(z)(\rho_1(z)-1)+w^2\rho_0(z)\rho_1(z)P(z\rho_0(z)\rho_1(z);w,1)\\
&=\cdots,
\end{align*}
which leads to the following result.

\begin{theorem}\label{thm}
The generating function $$P(z;w,1)=\sum_{n\geq1}\sum_{t\geq0}\sum_{k=1}^nP(n,t,k)w^tz^k=\sum_{n\geq1}\sum_{t\geq0}T(n,t)w^tz^n$$
is given by
$$T(z,w)=\sum_{j\geq0}(\rho_j(z)-1)w^j\prod_{i=0}^{j-1}\rho_i(z).$$
\end{theorem}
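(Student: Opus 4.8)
The plan is to take the functional equation \eqref{eqtt3} produced by the kernel method and iterate it to exhaustion, recognizing that the sequence of substitutions $z\mapsto z\rho_0(z)\mapsto\cdots$ is engineered precisely so that the Catalan factor generated at the $j$-th stage collapses to $\rho_j(z)$. Throughout I work in the ring of formal power series in $z$ with coefficients in $\mathbb{Q}[[w]]$, so that convergence means convergence in the $w$-adic topology. For brevity write $\Pi_j(z)=\prod_{i=0}^{j-1}\rho_i(z)$, with $\Pi_0(z)=1$, so that $\Pi_{j+1}(z)=\Pi_j(z)\rho_j(z)$.

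First I would record the single identity that drives everything. By the definition $\rho_j(z)=C\big(z(1-w)\Pi_j(z)\big)$, one has immediately
\[
C\big((z\Pi_j(z))(1-w)\big)=\rho_j(z).
\]
Applying \eqref{eqtt3} with $z$ replaced by $z\Pi_j(z)$, and using this identity together with $\Pi_{j+1}=\Pi_j\rho_j$, yields the one-step relation
\[
P(z\Pi_j(z);w,1)=\rho_j(z)-1+w\,\rho_j(z)\,P\big(z\Pi_{j+1}(z);w,1\big).
\]
This is the crucial shift: the substitution advances the index of $\rho$ by one and lengthens the product $\Pi_j$ by one factor. The special case $\rho_0(z\rho_0(z))=\rho_1(z)$ already appears implicitly in the displayed computation preceding the theorem.

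Next I would prove by induction on $J\ge 0$ that
\[
P(z;w,1)=\sum_{j=0}^{J-1}w^j\big(\rho_j(z)-1\big)\Pi_j(z)+w^J\,\Pi_J(z)\,P\big(z\Pi_J(z);w,1\big).
\]
The case $J=0$ is the trivial identity $P(z;w,1)=P(z;w,1)$, and the inductive step substitutes the one-step relation above into the remainder term, absorbing the new $\rho_J-1$ into the finite sum and leaving the remainder in exactly the same shape with $J$ increased by one. This makes rigorous, for all $J$, the pattern exhibited in the displayed iteration. Finally I would let $J\to\infty$: since $P(z;w,1)=\sum_{n\ge 1}\sum_{t\ge 0}T(n,t)w^tz^n$ has no constant term in $z$ while $\Pi_J(z)$ has constant term $1$ in $z$, the substitution $z\mapsto z\Pi_J(z)$ is legitimate on formal power series, and the remainder $w^J\Pi_J(z)P(z\Pi_J(z);w,1)$ carries the explicit factor $w^J$, hence has $w$-adic valuation at least $J$. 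Thus the remainder tends to $0$, the partial sums converge, and
\[
P(z;w,1)=\sum_{j\ge 0}w^j\big(\rho_j(z)-1\big)\Pi_j(z)=T(z,w),
\]
which is the claim. The first displayed equality in the statement follows from the definitions of $P_{n,t}(v)$, $P_n(w,v)$ and $P(z;w,v)$ on setting $v=1$, since $P_{n,t}(1)=\sum_{k=1}^nP(n,t,k)=T(n,t)$.

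The main obstacle I anticipate is not any single algebraic step but the clean justification of the limit: one must confirm that the remainder genuinely vanishes, which rests on the twin facts that every term acquires an honest power of $w$ and that the nested substitutions $z\mapsto z\Pi_J(z)$ stay well defined (never dropping the $z$-valuation below what is needed for the composition). Pinning down the shift identity in its general form, so that the telescoping produces $\rho_j$ rather than some unsimplified iterate of $C$, is the conceptual heart; everything else is bookkeeping of the products $\Pi_j$.
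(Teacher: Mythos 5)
Your proposal is correct and follows essentially the same route as the paper: the paper derives \eqref{eqtt3} via the kernel method and then iterates it (under the assumption $0<|w|<1$, $|z|<1$) exactly as in your one-step shift relation, with the displayed computation showing the first two iterations before asserting the general pattern. Your version merely makes the induction on $J$ and the vanishing of the remainder term (via the explicit $w^J$ factor and the well-definedness of the substitutions $z\mapsto z\Pi_J(z)$) fully rigorous, which is a welcome tightening but not a different argument.
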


Define $\psi_j(z)=\sqrt{2\psi_{j-1}(z)-1}$ with
$\psi_1(z)=\sqrt{1-4z(1-w)}$ and $\psi_0=1-2z(1-w)$. By induction
on $j\geq0$, we obtain
$$\rho_j(z)=\frac{1-\psi_{j+1}(z)}{1-\psi_j(z)}.$$
Thus, $\prod_{i=0}^{j-1}\rho_i(z)=\frac{1-\psi_{j}(z)}{2z(1-w)}$
for all $j\geq0$. Hence, by Theorem \ref{thm}, we have the
following formula.

\begin{theorem}\label{thm1}
The generating function $P(z;w,1)=T(z,w)$ is given by
$$T(z,w)=\sum_{j\geq0}\frac{\psi_j(z)-\psi_{j+1}(z)}{2z(1-w)}w^j,$$
where  $\psi_j(z)=\sqrt{2\psi_{j-1}(z)-1}$ with
$\psi_1(z)=\sqrt{1-4z(1-w)}$ and $\psi_0=1-2z(1-w)$.
\end{theorem}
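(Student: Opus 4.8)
The plan is to derive this closed form directly from Theorem~\ref{thm}, whose summands involve the iterates $\rho_j(z)$, by producing an explicit expression for each $\rho_j(z)$ in terms of the auxiliary radicals $\psi_j(z)$ and then exploiting the telescoping structure of the product $\prod_{i=0}^{j-1}\rho_i(z)$. The heart of the argument is the claim that
\[
\rho_j(z)=\frac{1-\psi_{j+1}(z)}{1-\psi_j(z)}
\]
for all $j\geq0$, which I would establish by induction on $j$.

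For the base case $j=0$, I would use the definition $\rho_0(z)=C(z(1-w))$ together with $C(u)=\frac{1-\sqrt{1-4u}}{2u}$ to write $\rho_0(z)=\frac{1-\sqrt{1-4z(1-w)}}{2z(1-w)}=\frac{1-\psi_1(z)}{2z(1-w)}$; since $1-\psi_0(z)=2z(1-w)$, this is exactly $\frac{1-\psi_1(z)}{1-\psi_0(z)}$. For the inductive step, the key observation is that once $\rho_i(z)=\frac{1-\psi_{i+1}(z)}{1-\psi_i(z)}$ holds for all $i<j$, the product telescopes:
\[
\prod_{i=0}^{j-1}\rho_i(z)=\frac{1-\psi_j(z)}{1-\psi_0(z)}=\frac{1-\psi_j(z)}{2z(1-w)},
\]
so that $z(1-w)\prod_{i=0}^{j-1}\rho_i(z)=\tfrac12\bigl(1-\psi_j(z)\bigr)$. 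Substituting this into the defining recurrence $\rho_j(z)=C\!\left(z(1-w)\prod_{i=0}^{j-1}\rho_i(z)\right)$, and using that $1-4\cdot\tfrac12(1-\psi_j(z))=2\psi_j(z)-1$ together with $\psi_{j+1}(z)=\sqrt{2\psi_j(z)-1}$, yields $\rho_j(z)=\frac{1-\psi_{j+1}(z)}{1-\psi_j(z)}$, closing the induction.

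With the closed form and the telescoped product in hand, the conclusion is a short computation. From Theorem~\ref{thm} each summand is $(\rho_j(z)-1)\,w^j\prod_{i=0}^{j-1}\rho_i(z)$, and
\[
\rho_j(z)-1=\frac{\psi_j(z)-\psi_{j+1}(z)}{1-\psi_j(z)},
\]
so multiplying by $\prod_{i=0}^{j-1}\rho_i(z)=\frac{1-\psi_j(z)}{2z(1-w)}$ cancels the factor $1-\psi_j(z)$ and produces $\frac{\psi_j(z)-\psi_{j+1}(z)}{2z(1-w)}\,w^j$; summing over $j\geq0$ gives the stated formula. I expect the only delicate point to be the inductive verification of the radical identity—specifically, correctly tracking that the argument $z(1-w)\prod_{i<j}\rho_i(z)$ collapses to $\tfrac12(1-\psi_j(z))$ so that $C$ evaluates to the next ratio $\frac{1-\psi_{j+1}}{1-\psi_j}$—while the final assembly is purely mechanical.
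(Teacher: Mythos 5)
Your proposal is correct and follows exactly the paper's route: the paper likewise establishes $\rho_j(z)=\frac{1-\psi_{j+1}(z)}{1-\psi_j(z)}$ by induction on $j$, telescopes the product to $\frac{1-\psi_j(z)}{2z(1-w)}$, and substitutes into Theorem~\ref{thm}. You have merely filled in the inductive details (the base case via $C(u)=\frac{1-\sqrt{1-4u}}{2u}$ and the evaluation of $C$ at $\tfrac12(1-\psi_j(z))$) that the paper leaves implicit, and these check out.
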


In order to find the generating function for
$T_t(z)=\sum_{n\geq1}T(n,t)z^n$, we have to find the coefficient
of $w^t$ in $T(z,w)$. Thus, by Theorem \ref{thm1}, we have
\begin{align}
T_t(z)=\sum_{j=0}^t[w^{t-j}]\left(\frac{\psi_j(z)-\psi_{j+1}(z)}{2z(1-w)}\right).\label{eqtt4}
\end{align}
For example, for $t=0$, we have
$$T_0(z)=[w^0]\left(\frac{\psi_0(z)-\psi_1(z)}{2z(1-w)}\right)=\frac{1-2z-\sqrt{1-4z}}{2z}=C(z)-1,$$
as expected. For $t=1$, we have
\begin{align*}
T_1(z)&=[w^1]\left(\frac{\psi_0(z)-\psi_1(z)}{2z(1-w)}\right)+[w^0]\left(\frac{\psi_1(z)-\psi_2(z)}{2z(1-w)}\right)\\
&=[w^1]\left(\frac{1-\sqrt{1-4z(1-w)}}{2z(1-w)}-1\right)+[w^0]\left(\frac{\sqrt{1-4z(1-w)}-\sqrt{2\sqrt{1-4z(1-w)}-1}}{2z(1-w)}\right)\\
&=-\frac{1}{\sqrt{1-4z}}+\frac{1-\sqrt{1-4z}}{2z}+\left(\frac{\sqrt{1-4z}-\sqrt{2\sqrt{1-4z}-1}}{2z}\right)\\
&=\frac{1-\sqrt{2\sqrt{1-4z}-1}}{2z}-\frac{1}{\sqrt{1-4z}}\\
&=z^3+10z^4+70z^5+424z^6+2382z^7+12804z^8+66946z^9+343772z^{10}+\cdots.
\end{align*}
Similarly, we have
\begin{align*}
T_2(z)&=\frac{1-\sqrt{2\sqrt{2\sqrt{1-4z}-1}-1}}{2z}+\frac{z}{\sqrt{1-4z}^3}-\frac{1}{\sqrt{1-4z}\sqrt{2\sqrt{1-4z}-1}}\\
&=8z^5+160z^6+1978z^7+19508z^8+168608z^9+1337684z^{10}+10003422z^{11}+\cdots,
\end{align*}

Note that
$\psi_j^k(z)=\sum_{i\geq0}I^k\binom{k/2}{i}(-2)^i\psi_{j-1}^i(z)$
for all $j\geq1$ and
$\psi_1^k=\sum_{i\geq0}\binom{k/2}{i}(-4)^iz^i(1-w)^i$, where
$I^2=-1$. Thus, for all $j\geq2$,
$$\psi_j(z)=\sum_{i_1,\ldots,i_j\geq0}I^{1+3i_j+\cdots+3i_3+2i_2+2i_1}2^{i_j+\cdots+i_2+2i_1} \binom{1/2}{i_j}\binom{i_j/2}{i_{j-1}}\cdots\binom{i_2/2}{i_1}z^{i_1}(1-w)^{i_1}.$$
Hence,  for all $s\geq0$ and $j\geq2$,
$$[w^s]\frac{\psi_j(z)}{2z(1-w)}=\sum_{i_1,\ldots,i_j\geq0}I^{1+3i_j+\cdots+3i_3+2i_2+2i_1+2s}2^{i_j+\cdots+i_2+2i_1-1} \binom{1/2}{i_j}\binom{i_1-1}{s}z^{i_1-1}\prod_{k=2}^{j}\binom{i_k/2}{i_{k-1}}$$
and
$$[w^s]\frac{\psi_1(z)}{2z(1-w)}=\sum_{i_1\geq0}I^{2i_1+2s}2^{2i_1-1}\binom{1/2}{i_1}\binom{i_1-1}{s}z^{i_1-1}.$$
Hence, by \eqref{eqtt4}, we can write an explicit formula for the
generating function $T_t(z)$ in terms of multi sums.




\bibliographystyle{acm}
\bibliography{refs_tier}

\end{document}